\numberwithin{figure}{section}
\theoremstyle{plain}
\newtheorem{thm}{\protect\theoremname}[section]
\theoremstyle{remark}
\newtheorem*{acknowledgement*}{\protect\acknowledgementname}
\theoremstyle{plain}
\newtheorem{fact}[thm]{\protect\factname}
\theoremstyle{plain}
\newtheorem{lem}[thm]{\protect\lemmaname}
\theoremstyle{plain}
\newtheorem{prop}[thm]{\protect\propositionname}
\theoremstyle{remark}
\newtheorem{rem}[thm]{\protect\remarkname}
\theoremstyle{plain}
\theoremstyle{plain}
\newtheorem{defn}[thm]{\protect\defnname}
\date{}
\let\originalleft\left
\let\originalright\right
\renewcommand{\left}{\mathopen{}\mathclose\bgroup\originalleft}
\renewcommand{\right}{\aftergroup\egroup\originalright}
\providecommand{\acknowledgementname}{Acknowledgement}
\providecommand{\factname}{Fact}
\providecommand{\lemmaname}{Lemma}
\providecommand{\propositionname}{Proposition}
\providecommand{\questionname}{Question}
\providecommand{\remarkname}{Remark}
\providecommand{\theoremname}{Theorem}
\providecommand{\defnname}{Definition}
\begin{document}
\global\long\def\RR{\mathbb{R}}%

\global\long\def\CC{\mathbb{C}}%

\global\long\def\ZZ{\mathbb{Z}}%

\global\long\def\NN{\mathbb{N}}%

\global\long\def\QQ{\mathbb{Q}}%

\global\long\def\TT{\mathbb{T}}%

\global\long\def\FF{\mathbb{F}}%

\global\long\def\vphi{\varphi}%

\global\long\def\sub{\subseteq}%

\global\long\def\one{\mathbbm1}%

\global\long\def\vol#1{\text{Vol\ensuremath{\left(#1\right)}}}%

\global\long\def\epi#1{\text{epi}\left\{  #1\right\}  }%

\global\long\def\sp{{\rm sp}}%

\global\long\def\K{\mathcal{K}}%

\global\long\def\A{\mathcal{A}}%

\global\long\def\L{\mathcal{L}}%
\global\long\def\P{\mathcal{P}}%

\global\long\def\W{\mathcal{W}}%

\global\long\def\iprod#1#2{\langle#1,\,#2\rangle}%

\global\long\def\conv{{\rm Conv}}%

\global\long\def\eps{\varepsilon}%

\global\long\def\norm#1{\left\Vert #1\right\Vert }%

\global\long\def\supp#1{{\rm supp}\left(#1\right)}%

\global\long\def\flr#1{\left\lfloor #1\right\rfloor }%

\global\long\def\ceil#1{\left\lceil #1\right\rceil }%

\global\long\def\hseg#1{\left\llbracket #1\right\rrbracket }%

\global\long\def\EE{\mathbb{E}}%
$ $

\global\long\def\RR{\mathbb{R}}%

\global\long\def\ZZ{\mathbb{Z}}%

\global\long\def\ll{\preceq}%

\global\long\def\bp#1{\big(#1\big)}%

\global\long\def\Bp#1{\Big(#1\Big)}%

\title{Discrete variants of Brunn-Minkowski type inequalities}
\author{Diana Halikias}
\address{Department of Mathematics, Yale University, New Haven, CT 06511, USA}
\email{diana.halikias@yale.edu}
\author{Bo'az Klartag}
\address{Department of Mathematics, Weizmann Institute of Science, Rehovot
76100 Israel.}
\email{boaz.klartag@weizmann.ac.il}
\author{Boaz A. Slomka}
\address{Department of Mathematics, the Open University of Israel, Ra'anana
4353701 Israel}
\email{slomka@openu.ac.il}
\begin{abstract}
We present an alternative, short proof of a recent discrete version
of the Brunn-Minkowski inequality due to Lehec and the second named author. Our proof also yields the four functions theorem of Ahlswede and Daykin as well as some new variants.
\end{abstract}

\maketitle

\section{Introduction}
Correlation inequalities such as the Fortuin-Kasteleyin-Ginibre (FKG) inequality are of use in
the analysis of several models in probability theory and statistical
physics (see, e.g., Grimmett \cite{Grimmett_Perc, Grimmett_Prob}). These inequalities are closely related to the following four functions theorem of Ahlswede and Daykin \cite{Ahlswede1978}:
\begin{thm}
\label{thm:4fcns}Suppose that $f,g,h,k:\ZZ^{n}\to[0,\infty)$ satisfy
\[
f\left(x\right)g\left(y\right)\le h\left(x\wedge y\right)k\left(x\vee y\right) \qquad  \forall x=\left(x_{1},\dots,x_{n}\right),y=\left(y_{1},\dots,y_{n}\right)\in\ZZ^{n}
\]
where $x\wedge y=\left(\min\left(x_{1},y_{1}\right),\dots,\min\left(x_{n},y_{n}\right)\right)$,
and $x\vee y=\left(\max\left(x_{1},y_{1}\right),\dots,\max\left(x_{n},y_{n}\right)\right)$.
Then
\[
\Bp{\sum_{x\in\ZZ^{n}}f\left(x\right)}\Bp{\sum_{x\in\ZZ^{n}}g\left(x\right)}\le\Bp{\sum_{x\in\ZZ^{n}}h\left(x\right)}\Bp{\sum_{x\in\ZZ^{n}}k\left(x\right)}.
\]
\end{thm}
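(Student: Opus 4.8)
The plan is a two–stage reduction. First I would dispose of the infinitude of $\ZZ^n$: restricting all four functions to a box $\{-N,\dots,N\}^n$ preserves the hypothesis, because an interval of $\ZZ$ is closed under $\min$ and $\max$, so the meet and join of two points of the box again lie in the box. On such a box everything is a finite sum, and once the inequality is proved there, letting $N\to\infty$ yields the inequality over all of $\ZZ^n$ by monotone convergence (each partial sum increases to the full sum, and products of increasing nonnegative sequences converge to the product of the limits in $[0,\infty]$). So it suffices to treat a finite box, i.e.\ a product of finite chains. The second stage peels the coordinates off one at a time, reducing to a single chain, whose base case is the two–point chain $\{0,1\}$, the only place where a genuine inequality is needed.

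For the dimension induction, suppose the theorem is known on a product of $n-1$ finite chains and on a single finite chain $C$. Writing $x=(x',x_n)$ and integrating out the last coordinate, define marginals $F(x')=\sum_{s\in C}f(x',s)$ and likewise $G,H,K$ from $g,h,k$. I claim these obey the four functions hypothesis on the smaller product. Fixing $x',y'$ and applying the original hypothesis to the points $(x',s)$ and $(y',t)$ gives $f(x',s)\,g(y',t)\le h(x'\wedge y',\,s\wedge t)\,k(x'\vee y',\,s\vee t)$ for all $s,t\in C$, since meet and join in $\ZZ^n$ act coordinatewise. This is exactly the hypothesis needed to apply the single–chain theorem to the four functions $s\mapsto f(x',s)$, $t\mapsto g(y',t)$, $s\mapsto h(x'\wedge y',s)$, $t\mapsto k(x'\vee y',t)$ on $C$, whose conclusion is precisely $F(x')G(y')\le H(x'\wedge y')K(x'\vee y')$. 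The induction hypothesis on the $(n-1)$–fold product then gives $(\sum F)(\sum G)\le(\sum H)(\sum K)$, and since $\sum_{x'}F(x')=\sum_x f(x)$ and similarly for the others, this is the desired inequality.

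It remains to handle a single finite chain $C=\{0,1,\dots,m\}$, which I would do by induction on $m$, with $m=0$ trivial and $m=1$ (the two–point case) proved separately below. For $m\ge2$ I fold the top two points: set $\hat f(i)=f(i)$ for $i\le m-2$ and $\hat f(m-1)=f(m-1)+f(m)$, and define $\hat g,\hat h,\hat k$ identically. Total sums are unchanged, so it suffices to verify the hypothesis for $\hat f,\hat g,\hat h,\hat k$ on $\{0,\dots,m-1\}$. The only new constraints involve the merged point. For $j<m-1$ against $m-1$, summing the two original constraints from $(m-1,j)$ and $(m,j)$ (resp.\ $(j,m-1)$ and $(j,m)$) yields exactly the folded inequality; and for the pair $(m-1,m-1)$ the required bound $(f(m-1)+f(m))(g(m-1)+g(m))\le(h(m-1)+h(m))(k(m-1)+k(m))$ is precisely the two–point theorem applied to the sub-chain $\{m-1,m\}$. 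Thus the hypothesis is preserved and the induction closes.

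The two–point case is where the real inequality lives and is the crux. With $a_i=f(i),b_i=g(i),c_i=h(i),d_i=k(i)$ for $i\in\{0,1\}$, the hypothesis reads $a_0b_0\le c_0d_0$, $a_1b_1\le c_1d_1$, $a_0b_1\le c_0d_1$ and $a_1b_0\le c_0d_1$. Setting $u=a_0b_1$, $v=a_1b_0$, $P=c_0d_1$, $Q=c_1d_0$, one has $u,v\le P$ and $uv=(a_0b_0)(a_1b_1)\le(c_0d_0)(c_1d_1)=PQ$. The elementary observation $(P-u)(P-v)\ge0$ gives $P(u+v)\le P^2+uv\le P(P+Q)$, hence $u+v\le P+Q$ (and trivially also when $P=0$, which forces $u=v=0$). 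Adding the bounds on $a_0b_0$ and $a_1b_1$ yields $(a_0+a_1)(b_0+b_1)=a_0b_0+a_1b_1+u+v\le c_0d_0+c_1d_1+c_0d_1+c_1d_0=(c_0+c_1)(d_0+d_1)$. I expect the main obstacle to be organizational rather than analytic: checking carefully that the folding step preserves the hypothesis in every case, and justifying the passage to infinite sums by monotone convergence. The single genuinely quantitative ingredient, the two–point inequality, is dispatched by the one–line identity $(P-u)(P-v)\ge0$.
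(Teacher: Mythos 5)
Your proof is correct, but it takes a genuinely different route from the paper. The paper deduces Theorem \ref{thm:4fcns} from the general Theorem \ref{thm:main}, whose engine (Proposition \ref{prop:core}) fixes the sum $z=x+y$ and works with the anti-diagonal products $F_z(j)=f(j)g(z-j)$, $H_z(j)=h(j)k(z-j)$, inducting on finite sets $A$ by deleting one or two extreme points selected via the ``exclusive'' property of the operation $T$. You instead follow the classical Ahlswede--Daykin line: truncate to a finite box (legitimate, since boxes are closed under $\wedge,\vee$, and monotone convergence passes to the limit), tensorize over coordinates so that the marginals $F,G,H,K$ inherit the four-functions hypothesis in one lower dimension, reduce a chain $\{0,\dots,m\}$ to $\{0,\dots,m-1\}$ by folding its top two points, and finish with the two-point case. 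All your steps check out; in particular the folded hypothesis is verified correctly in each of the three cases, and the cross-term constraints $a_0b_1\le c_0d_1$, $a_1b_0\le c_0d_1$ are stated correctly (both dominated by $c_0d_1$, not one by each of $c_0d_1$ and $c_1d_0$). Interestingly, the quantitative crux is the same in both proofs: your observation that $u,v\le P$ and $uv\le PQ$ imply $u+v\le P+Q$ via $(P-u)(P-v)\ge 0$ is precisely the paper's Fact \ref{Fact:max_sum}. What the two approaches buy is different. Yours is self-contained and arguably more elementary, but it is tied to the lattice operations: the folding step uses that $\{m-1,m\}$ is closed under $\wedge,\vee$ and that merging points is compatible with them, which fails for the midpoint-type maps $\lfloor\lambda x+(1-\lambda)y\rfloor$, $\lceil(1-\lambda)x+\lambda y\rceil$; so your argument proves Theorem \ref{thm:4fcns} but not Theorem \ref{thm:lambda_BM}. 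The paper's anti-diagonal induction is precisely designed to treat both simultaneously, for any translation-equivariant, Knothe-monotone (or exclusive) operation $T$.
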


Theorem \ref{thm:4fcns} is usually formulated under the additional
assumption that $f,g,h,k$ are all supported in the discrete cube $\{0,1\}^{n}$.
It was suggested by Gozlan, Roberto, Samson, and Tetali \cite{Gozlan2019}
that Theorem \ref{thm:4fcns} is connected with a discrete variant of
the Brunn-Minkowski inequality, recently proven by Lehec and the second
named author \cite[Theorem 1.4]{Klartag2019}, which is the case $\lambda=1/2,n=1$ of the following theorem:
\begin{thm}
\label{thm:lambda_BM}Let $\lambda\in\left[0,1\right]$ and suppose that
$f,g,h,k:\ZZ^{n}\to[0,\infty)$ satisfy
\[
f\left(x\right)g\left(y\right)\le h\left(\left\lfloor \lambda x+\left(1-\lambda\right)y\right\rfloor \right)k\left(\ceil{\left(1-\lambda\right)x+\lambda y}\right) \qquad \forall x,y\in\ZZ^{n}
\]
where $\flr x=\left(\flr{x_{1}},\dots\flr{x_{n}}\right)$ and $\ceil x=\left(\ceil{x_{1}},\dots,\ceil{x_{n}}\right)$.
Then
\[
\Bp{\sum_{x\in\ZZ^{n}}f\left(x\right)}\Bp{\sum_{x\in\ZZ^{n}}g\left(x\right)}\le\Bp{\sum_{x\in\ZZ^{n}}h\left(x\right)}\Bp{\sum_{x\in\ZZ^{n}}k\left(x\right)}.
\]
\end{thm}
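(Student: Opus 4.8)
The plan is to prove the statement by induction on the dimension $n$, reducing everything to the one-dimensional case $n=1$, which carries the entire Brunn--Minkowski content. The reduction works cleanly because the floor and ceiling are applied coordinate-wise, so the hypothesis decouples the last coordinate from the first $n-1$; the case $n=1$ will be proved first and then used as a tool at every step of the induction.

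For the induction step write each point of $\ZZ^{n}$ as $(x',s)$ with $x'\in\ZZ^{n-1}$ and $s\in\ZZ$. Fix $x',y'\in\ZZ^{n-1}$, put $u=\flr{\lambda x'+(1-\lambda)y'}$ and $v=\ceil{(1-\lambda)x'+\lambda y'}$, and consider the one-dimensional functions $s\mapsto f(x',s)$, $t\mapsto g(y',t)$, $m\mapsto h(u,m)$, $p\mapsto k(v,p)$. Since $\flr{\lambda(x',s)+(1-\lambda)(y',t)}=(u,\flr{\lambda s+(1-\lambda)t})$ and likewise for the ceiling, the hypothesis restricted to these slices is exactly the one-dimensional hypothesis, so the case $n=1$ gives
\[
\Bp{\sum_{s}f(x',s)}\Bp{\sum_{t}g(y',t)}\le\Bp{\sum_{m}h(u,m)}\Bp{\sum_{p}k(v,p)}.
\]
Writing $\bar f(x'),\bar g(y'),\bar h(u),\bar k(v)$ for these four inner sums, this reads $\bar f(x')\bar g(y')\le\bar h(\flr{\lambda x'+(1-\lambda)y'})\bar k(\ceil{(1-\lambda)x'+\lambda y'})$, which is precisely the hypothesis of the theorem in dimension $n-1$ for $\bar f,\bar g,\bar h,\bar k$. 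The induction hypothesis then gives $(\sum\bar f)(\sum\bar g)\le(\sum\bar h)(\sum\bar k)$, and since $\sum_{x'}\bar f(x')=\sum_{x\in\ZZ^{n}}f(x)$ and so on, the $n$-dimensional inequality follows.

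It remains to settle $n=1$, which I expect to be the main obstacle. First I would reduce to finitely supported data: restricting $f,g$ to $[-N,N]$ preserves the hypothesis and, once $h,k$ are cut down to the finitely many values of the floor and ceiling that actually occur, only shrinks the right-hand side; so by monotone convergence it suffices to treat all four functions finitely supported. I would also exploit the invariances of the problem --- the common translation $x\mapsto x+c$ with $c\in\ZZ$, and the scalings $(f,g)\mapsto(sf,g/s)$ and $(h,k)\mapsto(th,k/t)$, all of which preserve both the hypothesis and the two sides --- to normalize the location of the supports and the total masses.

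The surviving one-dimensional inequality is a discrete Prékopa--Leindler/Brunn--Minkowski statement, and here the specific arithmetic of $\flr{\cdot}$ and $\ceil{\cdot}$ must genuinely be used: the identity $\flr{\lambda x+(1-\lambda)y}+\ceil{(1-\lambda)x+\lambda y}=x+y$ together with coordinate-wise monotonicity of the map $(x,y)\mapsto(\flr{\lambda x+(1-\lambda)y},\ceil{(1-\lambda)x+\lambda y})$ is \emph{not} sufficient on its own, since degenerate ``collapsing'' maps with these two properties already violate the conclusion. I would therefore prove the finite one-dimensional case by induction on the total size of the supports, peeling off an extreme point --- say the largest element of $\supp{g}$ --- decreasing its mass while transferring the constraints it participates in onto $h$ or $k$, and applying the induction hypothesis to the strictly smaller configuration, the base case of a single-point support being elementary. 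The delicate point, and the step I expect to demand the real work, is the bookkeeping that keeps the product hypothesis intact under this peeling; this is exactly where the one-dimensional sumset phenomenon $|A+B|\ge|A|+|B|-1$ on $\ZZ$ enters. Finally, running the very same peeling argument for the map $(x,y)\mapsto(x\wedge y,x\vee y)$ in place of the floor/ceiling map recovers Theorem \ref{thm:4fcns}.
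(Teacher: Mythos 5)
Your outer structure is sound, and in fact it mirrors the paper: the paper proves its general statement (Theorem \ref{thm:main_gen}) by induction on the number of factors in a decomposition $G=G_{1}\times\dots\times G_{k}$, applying a one-dimensional core result (Proposition \ref{prop:core}) at each stage, and your slicing of $\ZZ^{n}=\ZZ^{n-1}\times\ZZ$ via the functions $\bar{f},\bar{g},\bar{h},\bar{k}$ is exactly that induction specialized to the coordinate decomposition. Your reduction to finitely supported functions is fine, and your observation that the identity $\flr{\lambda x+(1-\lambda)y}+\ceil{(1-\lambda)x+\lambda y}=x+y$ plus coordinate-wise monotonicity cannot suffice is correct and well taken: the collapsing map $T(x,y)=0$, paired with $x+y-T(x,y)=x+y$, is monotone and sum-preserving, yet taking $f=g=\one_{\{0,\dots,N\}}$, $h=\one_{\{0\}}$, $k=\one_{\{0,\dots,2N\}}$ violates the conclusion. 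The extra ingredient the paper isolates is translation equivariance, its property (P1), which the floor/ceiling maps have and the collapsing map does not.

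The gap is the one-dimensional case itself, which you rightly call the entire content of the theorem but then only sketch: ``peeling off the largest element of $\supp g$, decreasing its mass while transferring the constraints it participates in onto $h$ or $k$'' is not an argument, and you explicitly defer the bookkeeping that would make it one. The device you are missing is the paper's fibering over the antidiagonal $x+y=z$: for fixed $z\in\ZZ$ one sets $F_{z}(j)=f(j)g(z-j)$ and $H_{z}(j)=h(j)k(z-j)$, and it then suffices to prove, for every finite $A\sub\ZZ$ and $T(x,y)=\flr{\lambda x+(1-\lambda)y}$,
\[
\sum_{j\in A}F_{z}(j)\le\sum_{j\in T(A,z-A)}H_{z}(j),
\]
where $T(A,z-A)=\{T(u,z-v)\,;\,u,v\in A\}$. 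This fibering is what makes extreme-point peeling tractable: translation equivariance yields the pairing inequality $F_{z}(i)F_{z}(j)\le H_{z}(T(i,z-j))\,H_{z}(T(j,z-i))$, and the induction on $|A|$ proceeds by taking $x=\min A$, $y=\max A$ with $F_{z}(x)\le F_{z}(y)$ and splitting into two cases according to whether $F_{z}(y)$ dominates both $H_{z}$-values; one case uses the elementary fact that $ab\le cd$ and $\max\{a,b\}\le\max\{c,d\}$ imply $a+b\le c+d$, and both cases use monotonicity of $T$ to guarantee the $H_{z}$-terms just produced are not already counted in the induction hypothesis (the paper's ``exclusion'' conditions, which is where your sumset heuristic $|A+B|\ge|A|+|B|-1$ actually materializes). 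Without the fibering, your global peeling of $\supp g$ must cope with constraints coupling every pair $(x,y)$ at once, and nothing in your sketch indicates how the product hypothesis survives that step; as it stands, the proposal is a correct reduction plus a plan for the hard part, not a proof.
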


\noindent Here $\lfloor r\rfloor = \max \{ m \in \ZZ \, ; \, m \leq r \}$ is the lower integer part of $r\in\RR$
and $\lceil r\rceil=-\lfloor-r\rfloor$ the upper integer part. A
standard limiting argument (see \cite[Section 2.3]{Gozlan2019})
leads from the case $\lambda = 1/2, h=k$ of Theorem \ref{thm:lambda_BM}
to the case $\lambda = 1/2$ of the Brunn-Minkowski inequality in its multiplicative form:
\begin{equation*}  \vol{ \frac{A+B}{2} } \geq \sqrt{ \vol{A} \vol{B}},  \end{equation*}
where $A + B = \{ x + y \, ; \, x \in A, y \in B \}$, where $A, B \subseteq \RR^n$ are any Borel-measurable sets, and $\vol{\cdot}$ stands for the $n$-dimensional Lebesgue volume.
The proof in \cite{Klartag2019} for the case $n=1,\lambda=1/2$, which involves stochastic analysis, admits
a straightforward generalization to the more general case described above. An alternative argument using ideas from the theory of optimal transport was given by Gozlan, Roberto, Samson and Tetali~\cite{Gozlan2019}.

\medskip
Our goal in this note is to provide a unified proof of Theorem \ref{thm:4fcns}
and Theorem \ref{thm:lambda_BM}, which is perhaps  as
elementary as the original proof of the four functions theorem by Ahlswede
and Daykin \cite{Ahlswede1978}. The first issue that we would
like to address, is the identification of the relevant common features of operations such as
\begin{equation}
x\wedge y,x\vee y,\left\lfloor \frac{x+y}{2}\right\rfloor ,\left\lceil \frac{x+y}{2}\right\rceil ,\left\lfloor \lambda x+(1-\lambda)y\right\rfloor ,\left\lceil \lambda x+(1-\lambda)y\right\rceil ,\ldots\label{eq_1456}
\end{equation}
that are defined for $x,y\in\ZZ^{n}$, with $0<\lambda<1$.

\medskip
Our observation is that these operations $T:\ZZ^{n}\times\ZZ^{n}\rightarrow\ZZ^{n}$
satisfy two axioms:
\begin{enumerate}[labelindent=\parindent,leftmargin={*},label=(P\arabic*),align=left]
\item \label{enu:Additivity}$T$ is {\em translation equivariant}: $T\left(x+z,y+z\right)=T\left(x,y\right)+z$
for all $z\in\ZZ^n$.
\item \label{enu:Monotonicity} $T$ is {\em monotone in the sense of Knothe}  with respect to some decomposition of $\ZZ^n$ into a direct sum of groups $\ZZ^n = G_1 \times \dots \times G_k$. That is, $T = (T_1,\ldots,T_k)$ where for each $i\in\{1,\dots,k\}$:

    \begin{enumerate}
    \item[(i)] $T_i:(G_1\times\dots\times G_i)\times(G_1\times\dots\times G_i)\to G_i$. In other words, $T_i(x,y)$ depends only on the first $i$ coordinates of its  arguments $x,y \in G_1 \times \dots \times G_k$.
        \item[(ii)] There exists a total additive ordering $\ll_i$ on $G_i$ such that $T^{(a,b)}_i:G_i\times G_i \to G_i$ defined  by $T_i^{(a,b)}(x,y)=T_i\big((a,x),(b,y)\big)$ for $a,b\in G_1\times\dots\times G_{i-1}$ satisfies
            $$ x_1\ll_i x_2, \ y_1 \ll_i y_2 \qquad \Longrightarrow \qquad  T_i^{(a,b)}(x_1,y_1)\ll_i T_i^{(a,b)}(x_2,y_2) $$
            for all $a,b \in G_1\times\dots\times G_{i-1}$ and $x_1,x_2, y_1, y_2 \in G_i$.
\end{enumerate}
\end{enumerate}

Maps that satisfy a condition similar to (P2)  were used by Knothe  \cite{knothe} in his proof of the Brunn-Minkowski inequality. 
In the language of stochastic processes, one could say that the map $T$ is adapted to the filtration induced by the decomposition $\ZZ^n = G_1 \times ... \times G_k$, or that the map $T$ cannot see into the future and 
 it is monotone when conditioning on the past.
Recall that a total ordering $\preceq$ on an abelian group $G$ is a binary relation which is reflexive, anti-symmetric and transitive, such that for any distinct $x,y$, either $x\preceq y$ or else $y\preceq x$. An ordering $\preceq$ is additive if for all $x,y,z$,
\[
x\ll y\implies x+z\ll y+z.
\]
We remark that the requirement of existence of a total additive ordering on a finitely-generated abelian group $G$, forces $G$ to be isomorphic to $\ZZ^{\ell}$
for some $\ell$.

\medskip
The standard cartesian decomposition of $\ZZ^n = \ZZ \times \dots \times \ZZ$ into one-dimensional groups, each of which equipped with the standard order on $\ZZ$, attests to the fact that all the examples in  \eqref{eq_1456} satisfy properties \ref{enu:Additivity} and \ref{enu:Monotonicity}. In these examples, each  $T_i$ is a function from $G_i\times G_i$ to $G_i$.

\medskip Another natural example for an additive, total ordering on  $\ZZ^{n}$ (or on $\RR^n$) is the standard lexicographic order relation.
 Given an additive, total ordering $\preceq$ on $\RR^n$ and an invertible, linear map $L: \RR^n \rightarrow \RR^n$ we may construct another additive, total ordering $\preceq_L$ by requiring that $x \preceq_L y$ if and only if $Lx \preceq Ly$.
For an additive, total ordering $\preceq$ on $\ZZ^{n}$ the operations $\max(x,y)$ and $\min(x,y)$ are well-defined, and they satisfy properties \ref{enu:Additivity} and \ref{enu:Monotonicity} with $k = 1$.

\medskip
Yet another example for an operation $T:\ZZ^n\times \ZZ^n\to\ZZ^n$ that satisfies properties  \ref{enu:Additivity} and \ref{enu:Monotonicity} is given by $T=(T_1,\ldots,T_n)$ where
\begin{equation}\label{eq:Ollivier-Villani}
T_i(x,y)=\begin{cases}
x_{i}, & \#\left\{ j\le i\,\,;\,\,x_{j}\neq y_{j}\right\} \text{ is odd}\\
y_{i}, & \text{otherwise}
\end{cases}.
\end{equation}
We prove the following:
\begin{thm}
\label{thm:main}Let $T:\ZZ^{n}\times\ZZ^{n}\rightarrow\ZZ^{n}$ satisfy
properties \ref{enu:Additivity} and \ref{enu:Monotonicity}. Suppose that
$f,g,h,k:\ZZ^{n}\to[0,\infty)$ satisfy
\[
f\left(x\right)g\left(y\right)\le h\left(T\left(x,y\right)\right)k\left(x+y-T\left(x,y\right)\right) \qquad \qquad \forall x,y\in\ZZ^{n}.
\]
Then
\[
\Bp{\sum_{x\in\ZZ^{n}}f\left(x\right)}\Bp{\sum_{x\in\ZZ^{n}}g\left(x\right)}\le\Bp{\sum_{x\in\ZZ^{n}}h\left(x\right)}\Bp{\sum_{x\in\ZZ^{n}}k\left(x\right)}.
\]
\end{thm}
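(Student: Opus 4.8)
The plan is to argue by induction on the number $k$ of blocks in the decomposition $\ZZ^{n}=G_{1}\times\cdots\times G_{k}$ supplied by property \ref{enu:Monotonicity}, peeling off the first block $G_{1}$ at each step and reducing everything to the case $k=1$, i.e.\ to a single totally ordered group carrying a monotone, translation equivariant operation. Write an element of $\ZZ^{n}$ as $(a,x')$ with $a\in G_{1}$ and $x'\in G':=G_{2}\times\cdots\times G_{k}$, and introduce the marginals
$$ F(a)=\sum_{x'\in G'}f(a,x'),\qquad H(c)=\sum_{z'\in G'}h(c,z'), $$
and likewise $G,K$ from $g,k$. Since $\sum_{x\in\ZZ^{n}}f=\sum_{a\in G_{1}}F(a)$ and similarly for the others, the desired inequality is exactly $\bp{\sum_{a}F}\bp{\sum_{a}G}\le\bp{\sum_{c}H}\bp{\sum_{c}K}$, i.e.\ the four functions inequality on $G_{1}$ for the quadruple $F,G,H,K$ and the operation $T_{1}$.

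For the inductive step I would fix $a,b\in G_{1}$, set $c=T_{1}(a,b)$ and $d=a+b-c$, and consider the operation $\bar{T}\colon G'\times G'\to G'$ given by $\bar{T}(x',y')=(T_{2},\dots,T_{k})\bp{(a,x'),(b,y')}$. A direct check shows $\bar{T}$ inherits properties \ref{enu:Additivity} and \ref{enu:Monotonicity} for the $(k-1)$-block decomposition $G'=G_{2}\times\cdots\times G_{k}$: translation equivariance follows by applying the equivariance of $T$ to the common shift $(0,z')$, while the Knothe monotonicity of $\bar{T}$ is the special case of \ref{enu:Monotonicity}(ii) in which the conditioning element is restricted to have first coordinate $a$ (resp.\ $b$). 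Because $T\bp{(a,x'),(b,y')}=\bp{c,\bar{T}(x',y')}$ and correspondingly $(a,x')+(b,y')-T\bp{(a,x'),(b,y')}=\bp{d,\ x'+y'-\bar{T}(x',y')}$, the hypothesis of Theorem \ref{thm:main} for $f,g,h,k$ specialises to
$$ f(a,x')\,g(b,y')\le h\bp{c,\bar{T}(x',y')}\,k\bp{d,\ x'+y'-\bar{T}(x',y')}\qquad\forall x',y'\in G'. $$
Applying the induction hypothesis to $\bar{T}$ and to the functions $x'\mapsto f(a,x')$, $y'\mapsto g(b,y')$, $z'\mapsto h(c,z')$, $z'\mapsto k(d,z')$ gives $F(a)G(b)\le H(c)K(d)=H(T_{1}(a,b))K(a+b-T_{1}(a,b))$. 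Thus $F,G,H,K$ satisfy the four functions hypothesis on $G_{1}$ for $T_{1}$, and since $T_{1}$ is itself translation equivariant and monotone (the case $k=1$ of \ref{enu:Additivity} and \ref{enu:Monotonicity}), the base case closes the induction.

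It remains to treat $k=1$: a group $G\cong\ZZ^{\ell}$ with a total additive order $\ll$ and an operation $T$ that is translation equivariant and $\ll$-monotone, with $S(x,y):=x+y-T(x,y)$. Replacing $T$ by $T-T(0,0)$ and compensating by the translations $h(\cdot)\mapsto h(\cdot+T(0,0))$, $k(\cdot)\mapsto k(\cdot-T(0,0))$, which leave $\sum h$ and $\sum k$ unchanged, I may assume $T(x,x)=x$; monotonicity then forces $\min_{\ll}(x,y)\ll T(x,y)\ll\max_{\ll}(x,y)$, and the same bound for $S$, so $S$ is $\ll$-monotone as well. After a routine truncation reducing to finitely supported $f,g,h,k$, I would prove the inequality by a discrete analogue of the monotone transport proof of the Pr\'ekopa--Leindler inequality: order the supports by $\ll$, couple the quantiles of $f$ and $g$, use that the $\ll$-monotonicity of $T$ and $S$ carries this increasing coupling to increasing sequences in $G$, and close with an AM--GM estimate on the transported masses.

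The main obstacle is precisely this case $k=1$, and I do not expect it to be routine. It already contains the one-dimensional case of Theorem \ref{thm:lambda_BM} (take $T(x,y)=\flr{(x+y)/2}$, so that the hypothesis reads $f(x)g(y)\le h(\flr{(x+y)/2})k(\ceil{(x+y)/2})$), which is the discrete Brunn--Minkowski inequality of Klartag and Lehec; hence no argument that sidesteps its genuine content can succeed. In particular, the naive strategy of peeling the $\ll$-largest point $M$ and aggregating the remaining mass into a two-point chain fails: for midpoint-type $T$ a small $x$ paired with $M$ is sent to an \emph{interior} value $T(x,M)$, so the two-point inequalities one would need (comparing the mass of $f$ below $M$ times $g(M)$ against the mass of $h$ below $M$ times $k(M)$) are simply false. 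The real work is to implement the transport and AM--GM scheme so that the unavoidable integer rounding inside $T$ and $S$ is absorbed by their monotonicity; this is the delicate heart of the argument, while everything above it is the elementary bookkeeping that the axioms \ref{enu:Additivity}--\ref{enu:Monotonicity} are designed to make painless.
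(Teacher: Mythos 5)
Your reduction to the single-block case is correct and is in fact the same bookkeeping the paper performs: to prove its general Theorem \ref{thm:main_gen}, the paper runs exactly this induction on $k$, freezing $a,b\in G_1$, passing to the operation $T'(x',y')=(T_2,\dots,T_k)\big((a,x'),(b,y')\big)$ on $G'=G_2\times\dots\times G_k$, and applying the one-block result to the marginals $F,G,H,K$ on $G_1$. The problem is that you never prove the one-block case, and you say so yourself: your plan for $k=1$ is a discrete ``monotone transport / quantile coupling plus AM--GM'' argument which you describe as the delicate heart and do not implement. Since, as you correctly observe, this case already contains the Klartag--Lehec discrete Brunn--Minkowski inequality, the proposal is missing precisely the substantive content of the theorem; what you have written is correct scaffolding around an unproved core, so it does not constitute a proof.

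For comparison, the paper closes this hole with a short elementary argument, not with transport. Fix $z$ and set $F_z(j)=f(j)g(z-j)$, $H_z(j)=h(j)k(z-j)$; translation equivariance and the hypothesis give $F_z(i)F_z(j)\le H_z(T(i,z-j))H_z(T(j,z-i))$ for all $i,j$. One then proves, by induction on $|A|$ for finite $A\subseteq G_1$, that $\sum_{j\in A}F_z(j)\le\sum_{j\in T(A,z-A)}H_z(j)$, and sums over $z$ at the end. In the induction step one removes the $\ll$-extreme points $x=\min A$, $y=\max A$: equivariance plus monotonicity show that $T(y,z-x)-T(x,z-y)=y-x$ strictly dominates every difference realizable inside $A\setminus\{x\}$ or $A\setminus\{y\}$, so the two ``extreme'' values cannot both be absorbed by the smaller sets, and neither lies in $T(A_3,z-A_3)$ for $A_3=A\setminus\{x,y\}$; the two resulting cases are finished with the elementary fact that $ab\le cd$ and $\max\{a,b\}\le\max\{c,d\}$ imply $a+b\le c+d$. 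This is what the paper axiomatizes as the exclusion property (Definition \ref{def:exlucsion}, Proposition \ref{prop:core} and Lemma \ref{claim:Mon_Implies_12}); no coupling or rounding analysis is needed. Separately, one logical slip in your sketch deserves a flag: from $\min{}_{\ll}(x,y)\ll S(x,y)\ll\max{}_{\ll}(x,y)$ you conclude that $S$ is $\ll$-monotone, which is a non sequitur --- a two-sided bound does not imply monotonicity. The conclusion happens to be true, but for a different reason: writing $T(x,y)=x+\varphi(y-x)$ via equivariance, monotonicity of $T$ is equivalent to both $\varphi$ and $d\mapsto d-\varphi(d)$ being non-decreasing, and the latter is exactly monotonicity of $S$.
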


Clearly Theorem \ref{thm:4fcns} and Theorem \ref{thm:lambda_BM} follow from Theorem \ref{thm:main}. See also Borell \cite[Theorem 2.1]{borell} for Brunn-Minkowski type inequalities for operations other than Minkowski sum with monotonicity properties.

\medskip One can relax the monotonicity property \ref{enu:Monotonicity} by replacing it with another ``exclusion'' property, which requires no ordering at all. We formulate this property, as well as our next theorem, in greater generality, with $\ZZ^{n}$ replaced by a finitely generated abelian group $G$, and $T:\ZZ^{n}\times\ZZ^{n}\to\ZZ^{n}$ replaced by $T:G\times G\to G$. It is well-known that any such $G$ is isomorphic to $\ZZ^{n}\times(\ZZ/p_{1}\ZZ)\times\dots\times(\ZZ/p_{k} \ZZ)$ for some powers of primes $p_{1},\dots,p_{k}$.

\begin{defn}\label{def:exlucsion}
{\rm We say that an operation $T:G \times G\to G$ is} {\it exclusive} {\rm if for every finite set $A\sub G$ with at least two elements, and all $z\in G$, there exist distinct $x,y\in A$ such that for $A_{1}=A\setminus\left\{ x\right\} $, $A_{2}=A\setminus\left\{ y\right\} $, and $A_{3}=A\setminus\left\{ x,y\right\}$, the following conditions holds:
	\begin{enumerate}[label=(\alph*)]
		\item \label{enu:T123_a}
		$\{T\left(x,z-y\right),T\left(y,z-x\right)\}\not\sub T\left(A_{i},z-A_{i}\right)$ for $i\in\{1,2\}$,
		\item \label{enu:T123_b}
		$\{T\left(x,z-y\right),T\left(y,z-x\right)\}\cap T\left(A_{3},z-A_{3}\right)=\emptyset,$\smallskip

\noindent where $T(A_i, z-A_i) = \{ T(u,z-v) \, ; \, u,v \in A_i\}$.
\end{enumerate}}
\end{defn}

\noindent In the next theorem, we replace \ref{enu:Monotonicity} by the following property:
\begin{enumerate}[labelindent=\parindent,leftmargin={*},label=(P\arabic*'), start=2,align=left]
	\item \label{enu:T123} There exists a decomposition of abelian groups $G = G_1\times \dots \times G_k$ such that
	\begin{enumerate}
	
		%\noindent (i)
		\item[(i)]$T=(T_1,\dots,T_k)$ with $T_i:(G_1\times\dots\times G_i)\times(G_1\times\dots\times G_i)\to G_i$ for each $i$.
		%\noindent (ii)
		\item[(ii)] For all $i\in\{1,\dots,k\}$ and $a,b\in G_1\times\dots\times G_{i-1}$ the operation $T_i^{(a,b)}:G_i\times G_i\to G_i$ defined by $T^{(a,b)}_i(x,y)=T_i((a,x) ,(b,y))$ is exclusive.
	\end{enumerate}
\end{enumerate}

We prove the following:

\begin{thm}
\label{thm:main_gen}Let $\left(G,+\right)$ be a finitely generated abelian group, and $T:G\times G\to G$ satisfy   \ref{enu:Additivity} and \ref{enu:T123}. Suppose that $f,g,h,k:G\to[0,\infty)$ satisfy
\[
f\left(x\right)g\left(y\right)\le h\left(T\left(x,y\right)\right)k\left(x+y-T\left(x,y\right)\right) \qquad \forall x,y\in G.
\]
Then
\[
\Bp{\sum_{x\in G}f\left(x\right)}\Bp{\sum_{x\in G}g\left(x\right)}\le\Bp{\sum_{x\in G}h\left(x\right)}\Bp{\sum_{x\in G}k\left(x\right)}.
\]
\end{thm}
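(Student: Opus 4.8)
The plan is to run two nested inductions: an outer induction on the number $k$ of factors in the decomposition $G = G_1 \times \dots \times G_k$ from property \ref{enu:T123}, which reduces everything to the case $k=1$ of a single group carrying an exclusive operation, and an inner induction establishing that base case. I would establish the base case ($k=1$) first, unconditionally, and then use it inside the outer step, so there is no circularity.

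For the outer induction I would peel off the \emph{last} factor $G_k$. Writing $H = G_1 \times \dots \times G_{k-1}$ and $x = (x',a)$, $y = (y',b)$ with $x',y' \in H$ and $a,b \in G_k$, property \ref{enu:T123} gives $T(x,y) = (T_H(x',y'),\, T_k(x,y))$ where $T_H = (T_1,\dots,T_{k-1})$ depends only on $x',y'$; one checks from \ref{enu:Additivity} and \ref{enu:T123} that $T_H$ again satisfies \ref{enu:Additivity} and \ref{enu:T123} on $H$. Introduce the marginals $\bar f(x') = \sum_{a\in G_k} f(x',a)$ and, analogously, $\bar g,\bar h,\bar k$. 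Since $\sum_{x'\in H}\bar f(x') = \sum_{x\in G}f(x)$ and similarly for the others, the desired inequality is exactly the four-functions inequality for $\bar f,\bar g,\bar h,\bar k$ on $H$ relative to $T_H$, which the outer induction hypothesis supplies \emph{provided} the constraint $\bar f(x')\bar g(y') \le \bar h(T_H(x',y'))\,\bar k(x'+y'-T_H(x',y'))$ holds. Fixing $x',y'$ and setting $u' = T_H(x',y')$, $v' = x'+y'-u'$, this last inequality is itself a four-functions inequality on $G_k$ for the slices $a\mapsto f(x',a)$, $b\mapsto g(y',b)$, $c\mapsto h(u',c)$, $d\mapsto k(v',d)$, with operation $S = T_k^{(x',y')}$, $S(a,b) = T_k((x',a),(y',b))$. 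A direct computation from \ref{enu:Additivity} (applied with the shift $(0,e)$) shows $S(a+e,b+e)=S(a,b)+e$, and \ref{enu:T123} says $S$ is exclusive; hence the base case applies on $G_k$ and yields precisely the required constraint. This closes the reduction to $k=1$.

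For the base case let $G$ be finitely generated abelian and $T$ translation equivariant and exclusive. First I would dispose of convergence: if $\sum h = 0$ or $\sum k = 0$ the hypothesis forces $f\equiv 0$ or $g\equiv 0$, and if $(\sum h)(\sum k)=\infty$ there is nothing to prove, so we may assume both sums finite; truncating $f$ and $g$ to a finite set $B$ (replacing them by $f\cdot\one_{B}$, $g\cdot\one_{B}$, which only weakens the constraint) and letting $B\uparrow G$ by monotone convergence reduces to $f,g$ finitely supported. Set $A = \supp f\cup\supp g$ and induct on $|A|$. When $|A|=1$, say $A=\{w\}$, the entire left-hand side is the single product $f(w)g(w)$, bounded by the hypothesis at $(w,w)$ by the single term $h(T(w,w))\,k(2w-T(w,w))$ of $(\sum h)(\sum k)$.

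The inductive step is where exclusivity does the work, and it is the part I expect to be the main obstacle. The idea is to apply Definition~\ref{def:exlucsion} to $A$ for a suitably chosen $z\in G$; the reflection $v\mapsto z-v$ recasts the products $f(u)g(z-v)$ in terms of the map $(u,v)\mapsto T(u,z-v)$, whose image on $A\times A$ is exactly $T(A,z-A)$, which is why the sets $T(A_i,z-A_i)$ govern the bookkeeping. Exclusivity produces distinct $x,y\in A$ with the non-collision data \ref{enu:T123_a} and \ref{enu:T123_b} for $A_1 = A\setminus\{x\}$, $A_2 = A\setminus\{y\}$, $A_3 = A\setminus\{x,y\}$: condition \ref{enu:T123_b} guarantees that the two boundary outputs $T(x,z-y)$ and $T(y,z-x)$ are not produced by any pair from the strictly smaller $A_3$, so the mass of $h$ and $k$ they consume is disjoint from that used by the smaller instance, while condition \ref{enu:T123_a} guarantees that at least one of these outputs is genuinely new relative to each of $A_1,A_2$, which is what permits redistributing the mass of $f,g$ off the removed point without violating the constraint. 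Carrying this out should yield modified four-tuples supported on $A_1$, $A_2$ (or $A_3$) whose sums dominate the original left-hand side and are dominated by the original right-hand side, so that the induction hypothesis on $|A|$ closes the argument. Pinning down this exchange — specifying exactly which products of $f,g$ are rerouted to which products of $h,k$ so that \ref{enu:T123_a} and \ref{enu:T123_b} are precisely the hypotheses making the accounting valid — is the technical heart, and is the natural generalization of the two-point rearrangement underlying the Ahlswede--Daykin argument.
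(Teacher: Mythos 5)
Your outer induction is sound, and it is essentially a mirror image of the paper's: the paper peels off the \emph{first} factor, applying its induction hypothesis to the slices on $G_2\times\dots\times G_k$ and the exclusive base case to the marginals on $G_1$, whereas you peel off the \emph{last} factor, applying the base case to the slices on $G_k$ and the induction hypothesis to the marginals on $G_1\times\dots\times G_{k-1}$. Both reductions are correct, and your verifications that the restricted operations inherit \ref{enu:Additivity} and \ref{enu:T123} go through exactly as you indicate.

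The genuine gap is the base case, which is the core of the entire theorem (the paper's Proposition \ref{prop:core}), and you explicitly leave its inductive step unresolved (``pinning down this exchange \dots is the technical heart''). Two things are missing. First, the missing structural idea is that the correct inductive statement is not the global four-functions inequality for functions of shrinking support, but a \emph{per-$z$, localized} inequality: fix $z\in G$, set $F_z(j)=f(j)g(z-j)$ and $H_z(j)=h(j)k(z-j)$, and prove by induction on $|A|$ that for every finite $A\subseteq G$,
\[
\sum_{j\in A}F_z(j)\ \le \sum_{j\in T(A,z-A)}H_z(j),
\]
then sum over $z$ at the very end. Localizing the right-hand side to $T(A,z-A)$ is exactly what makes conditions \ref{enu:T123_a} and \ref{enu:T123_b} usable: they assert that the boundary values $T(x,z-y),T(y,z-x)$ avoid the sets $T(A_i,z-A_i)$ occurring in the induction hypothesis, not that they avoid supports of $h,k$. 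Your version --- inducting on $|\supp{f}\cup\supp{g}|$ and invoking Definition \ref{def:exlucsion} ``for a suitably chosen $z$'' --- cannot be closed as stated, because the pair $x,y$ produced by exclusivity depends on $z$, while any modification of $f,g$ is global in $z$; this is precisely why one must slice by $z$ before inducting. Second, the actual exchange mechanism is absent: one needs the cross inequality $F_z(i)F_z(j)\le H_z(T(i,z-j))\,H_z(T(j,z-i))$ (a consequence of \ref{enu:Additivity}), the normalization $F_z(x)\le F_z(y)$, and a case split on whether $F_z(y)\ge\max\{H_z(T(x,z-y)),H_z(T(y,z-x))\}$. In the first case $F_z(x)$ is bounded by the minimum of the two boundary terms and one applies the hypothesis to $A_1=A\setminus\{x\}$ together with \ref{enu:T123_a}; in the second, the elementary fact that $ab\le cd$ and $\max\{a,b\}\le\max\{c,d\}$ imply $a+b\le c+d$ gives $F_z(x)+F_z(y)\le H_z(T(x,z-y))+H_z(T(y,z-x))$, and one applies the hypothesis to $A_3=A\setminus\{x,y\}$ together with \ref{enu:T123_b} and the observation (again from \ref{enu:Additivity}) that $T(x,z-y)\ne T(y,z-x)$. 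None of this appears in your proposal, so the proof is incomplete exactly where the difficulty lies.
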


The next two sections are devoted to the proofs of the above theorems.
We additionally include a final section with commentary on the applicability of this work to related inequalities,
such as the ones proven by Cordero-Erausquin and Maurey \cite{Cordero-Erausquin2017},
 Iglesias, Yepes Nicol\'as and Zvavitch \cite{INZ}
and Ollivier and Villani \cite{Ollivier-Villani}

\begin{acknowledgement*}
The first named author would like to express her gratitude to the Kupcinet-Getz International Summer School at the Weizmann Institute for supporting this research. The second named author was partially supported by a grant from the Israel Science Foundation (ISF). We thank the anonymous referee for the useful comments and suggestions.
\end{acknowledgement*}

\section{\label{sec:proof_main_gen}Proof of Theorem \ref{thm:main_gen}}

The core of this paper is the proof of Theorem \ref{thm:main_gen} in the particular case where $T$ itself is exclusive, which is Proposition \ref{prop:core} below. We begin with the following elementary fact:
\begin{fact}
\label{Fact:max_sum}Suppose $a,b,c,d\ge0$. If $ab\le cd$ and $\max\left\{ a,b\right\} \le\max\left\{ c,d\right\} $
then $a+b\le c+d$.
\end{fact}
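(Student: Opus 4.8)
The plan is to use the evident symmetries of the statement to normalize the four numbers, and then to reduce the conclusion to a single scalar comparison that drops out of one algebraic identity.

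First I would observe that each of the three relations in play---the product inequality $ab \le cd$, the maximum inequality $\max\{a,b\} \le \max\{c,d\}$, and the target sum inequality $a + b \le c + d$---is unchanged if we interchange $a$ and $b$, and likewise unchanged if we interchange $c$ and $d$. Hence there is no loss of generality in assuming $a \ge b$ and $c \ge d$. Under this normalization the hypothesis $\max\{a,b\} \le \max\{c,d\}$ becomes simply $a \le c$, and the desired conclusion $a + b \le c + d$ is equivalent to $b - d \le c - a$.

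Next I would split on the sign of $b - d$. If $b \le d$ the claim is immediate, since then $b - d \le 0 \le c - a$. So assume $b > d$; in particular $a \ge b > d \ge 0$, so $a > 0$ and $d < a$. Now I would invoke the identity
\[
cd - ab = d(c-a) + a(d-b),
\]
whose right-hand side is nonnegative by the hypothesis $ab \le cd$. Rearranging gives $a(b - d) \le d(c - a)$, and since $0 \le d \le a$ while $c - a \ge 0$ we may enlarge the right-hand side to obtain $a(b-d) \le a(c-a)$. Dividing by $a > 0$ yields $b - d \le c - a$, which is exactly what was needed.

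The statement is elementary and I do not anticipate a genuine obstacle; the only point requiring care is the boundary bookkeeping, namely confirming that we are entitled to divide by $a$ in the final step. This is guaranteed in the nontrivial case by the chain $a \ge b > d \ge 0$, which forces $a$ to be strictly positive, so no separate treatment of vanishing variables is needed.
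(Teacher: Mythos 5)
Your proof is correct and complete, and it takes a genuinely different route from the paper's. You exploit the $a \leftrightarrow b$ and $c \leftrightarrow d$ symmetries to normalize $a \ge b$, $c \ge d$ (so the max hypothesis becomes $a \le c$), dispose of the case $b \le d$ trivially, and settle the case $b > d$ via the identity $cd - ab = d(c-a) + a(d-b)$ together with $d < a$ and $c - a \ge 0$; the division by $a$ is legitimately covered by $a \ge b > d \ge 0$. The paper argues differently: it inflates $(a,b)$ to an auxiliary pair $(A,B)$ with $A \ge a$, $B \ge b$, $AB = cd$ and $\max\{A,B\} \le \max\{c,d\}$, and then concludes from $(a+b)^2 \le (A+B)^2 = 4AB + (A-B)^2 \le 4cd + (c-d)^2 = (c+d)^2$. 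The paper's argument is shorter and isolates a conceptual point -- for a fixed value of the product, the sum of two nonnegative numbers is an increasing function of their maximum -- but it leaves to the reader both the existence of the pair $(A,B)$ and the justification of $(A-B)^2 \le (c-d)^2$, each of which requires essentially the same elementary bookkeeping you carry out explicitly. Your version trades that slick square identity for a fully self-contained chain of inequalities with no existence claim to verify.
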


\begin{proof}
\noindent Pick $A \geq a, B \geq b$ such that $\max \{A, B \} \leq \max\left\{ c,d\right\} $ and $AB = cd = r$.  Then $(A-B)^2 \leq (c-d)^2$ and so
$ (a+b)^2 \leq (A+B)^2 = 4 r + (A - B)^2 \leq 4r + (c-d)^2 = (c+d)^2$.
\end{proof}
Recall that under the assumptions of Theorem \ref{thm:main_gen} we
have $f,g,h,k:G\to[0,\infty)$ satisfying
\begin{equation}
f\left(x\right)g\left(y\right)\le h\left(T\left(x,y\right)\right)k\left(x+y-T\left(x,y\right)\right) \qquad \forall x,y\in G.\label{eq:ineq_cond}
\end{equation}

\noindent For $j,z\in G$ denote $F_{z}\left(j\right)=f\left(j\right)g\left(z-j\right)$
and $H_{z}\left(j\right)=h\left(j\right)k\left(z-j\right)$. Note
that, by (\ref{eq:ineq_cond}),
\begin{equation}\label{eq:55}
F_{z}\left(j\right)\le H_{z}\left(T\left(j,z-j\right)\right).
\end{equation}
We claim that for all $i,j,z\in G$ we have
\begin{equation}
F_{z}\left(i\right)F_{z}\left(j\right)\le H_{z}\left(T\left(i,z-j\right)\right)H_{z}\left(T\left(j,z-i\right)\right).\label{claim:FiFj}
\end{equation}

\noindent Indeed, by (\ref{eq:ineq_cond}) and \ref{enu:Additivity}
we have
\begin{align*}
F_{z}\left(i\right)F_{z}\left(j\right) & = \, f\left(i\right)g\left(z-i\right)f\left(j\right)g\left(z-j\right)=f\left(i\right)g\left(z-j\right)f\left(j\right)g\left(z-i\right)\\
 & \le \, h\left(T\left(i,z-j\right)\right)k\left(z+\left(i-j\right)-T\left(i,z-j\right)\right)h\left(T\left(j,z-i\right)\right)k\left(z+\left(j-i\right)-T\left(j,z-i\right)\right)\\
 %& =h\left(T\left(i,z-j\right)\right)k\left(z+\left(j-i\right)-T\left(j,z-i\right)\right)h\left(T\left(j,z-i\right)\right)k\left(z+\left(i-j\right)-T\left(i,z-j\right)\right)\\
 & =h\left(T\left(i,z-j\right)\right)k\left(z-T\left(i,z-j\right)\right)h\left(T\left(j,z-i\right)\right)k\left(z-T\left(j,z-i\right)\right)\\
 & = \, H_{z}\left(T\left(i,z-j\right)\right)H_{z}\left(T\left(j,z-i\right)\right).
\end{align*}

\begin{prop}\label{prop:core}
Let $\left(G,+\right)$ be a finitely generated abelian group, and let $T:G\times G\to G$ be an exclusive operation that satisfies \ref{enu:Additivity}. Suppose that $f,g,h,k:G\to[0,\infty)$ satisfy
		\[
		f\left(x\right)g\left(y\right)\le h\left(T\left(x,y\right)\right)k\left(x+y-T\left(x,y\right)\right) \qquad \forall x,y\in G.
		\]
		Then
		\[
		\Bp{\sum_{x\in G}f\left(x\right)}\Bp{\sum_{x\in G}g\left(x\right)}\le\Bp{\sum_{x\in G}h\left(x\right)}\Bp{\sum_{x\in G}k\left(x\right)}.
		\]
	\end{prop}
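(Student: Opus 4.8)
The plan is to fix $z\in G$ and reduce the proposition to the single per-$z$ inequality $\sum_{j}F_{z}(j)\le\sum_{j}H_{z}(j)$; summing this over $z\in G$ and using the identity $\bp{\sum_{x}f(x)}\bp{\sum_{y}g(y)}=\sum_{z}\sum_{j}F_{z}(j)$ together with its analogue for $h,k$ then yields the claim. A routine truncation lets me assume $f,g$ are finitely supported, so each $F_z$ has finite support and $F_z\equiv 0$ for all but finitely many $z$; and if $\sum_j H_z(j)=\infty$ the per-$z$ inequality is trivial. The one substantive input is \eqref{claim:FiFj}, which for fixed $z$ reads $F_z(i)F_z(j)\le H_z\!\left(T(i,z-j)\right)H_z\!\left(T(j,z-i)\right)$ for all $i,j$, and which I will take as the sole hypothesis in an induction on $\left|\supp{F_z}\right|$.

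Fix $z$ and write $F=F_z$, $H=H_z$, $A=\supp F$. The base case $\left|A\right|\le1$ is immediate: if $A=\{x_0\}$ the diagonal instance $i=j=x_0$ of \eqref{claim:FiFj} gives $F(x_0)\le H(T(x_0,z-x_0))\le\sum_j H(j)$. For the step with $\left|A\right|\ge2$, I apply exclusivity of $T$ to $A$ and $z$ to get distinct $x,y\in A$, the sets $A_1=A\setminus\{x\}$, $A_2=A\setminus\{y\}$, $A_3=A\setminus\{x,y\}$, and the targets $p=T(x,z-y)$, $q=T(y,z-x)$ satisfying (a) and (b). The mechanism is to shrink the support by one of three moves, in each case subtracting matching $H$-mass at points lying \emph{outside} the relevant image set $T(A_i,z-A_i)$, so that \eqref{claim:FiFj} is inherited by the truncated pair and the inductive hypothesis applies: (Move-X) drop $x$, reduce to $A_1$; (Move-Y) drop $y$, reduce to $A_2$; (Move-XY) drop both, reduce to $A_3$. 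Condition (b) gives $p,q\notin T(A_3,z-A_3)$, so for Move-XY one may delete the full masses $H(p),H(q)$ without disturbing the pairwise bound on $A_3$; condition (a) supplies, for each $i\in\{1,2\}$, a point of $\{p,q\}$ outside $T(A_i,z-A_i)$, which is where Move-X or Move-Y pays.

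To choose the move, relabel so that $F(x)\ge F(y)$ (the exclusivity data is symmetric under $x\leftrightarrow y$, which swaps $p\leftrightarrow q$ and $A_1\leftrightarrow A_2$) and split on the \emph{max condition} $F(x)\le\max\{H(p),H(q)\}$. If it holds, then Fact \ref{Fact:max_sum} applied to $F(x),F(y),H(p),H(q)$ — whose product hypothesis is exactly the instance $i=x,j=y$ of \eqref{claim:FiFj} — yields $F(x)+F(y)\le H(p)+H(q)$; when $p\ne q$ this closes Move-XY, while the degenerate case $p=q$ is settled by Move-X, since then (a) forces $p\notin T(A_1,z-A_1)$ and the max condition reads $F(x)\le H(p)$. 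If instead the max condition fails, so $F(x)>H(p)$ and $F(x)>H(q)$, the product bound $F(x)F(y)\le H(p)H(q)$ forces $F(y)<\min\{H(p),H(q)\}$; hence \emph{both} $p$ and $q$ carry $H$-mass exceeding $F(y)$, and choosing (via (a)) a point $w\in\{p,q\}$ outside $T(A_2,z-A_2)$ I may run Move-Y, paying $F(y)$ at $w$.

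I expect the main obstacle to be exactly this last point: the hypothesis $\max\{F(x),F(y)\}\le\max\{H(p),H(q)\}$ that Fact \ref{Fact:max_sum} demands is \emph{not} automatic, and (a)--(b) are purely combinatorial and say nothing about the magnitudes $H(p),H(q)$. The resolution is that the failure of the max condition is self-correcting: by the product bound it renders the smaller mass $F(y)$ strictly cheaper than either of $H(p),H(q)$, which is precisely what the single removal Move-Y requires, while (a) certifies a free payment point. Checking that each move preserves \eqref{claim:FiFj} for the truncated functions (so the induction really applies), and disposing of the coincidence $p=q$ and of ties $F(x)=F(y)$, are the remaining technical matters; the finite-support reduction and the passage from the per-$z$ statement back to the product of sums are routine.
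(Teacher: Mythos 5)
Your proposal is correct and takes essentially the same route as the paper: fix $z$, reduce everything to the pairwise bound \eqref{claim:FiFj}, induct on the size of the support, apply exclusivity to pick $x,y$, and split on whether the larger of $F(x),F(y)$ is dominated by $\max\{H(p),H(q)\}$, using Fact \ref{Fact:max_sum} together with condition (b) to remove both points in one case, and the min-bound together with condition (a) to remove a single point in the other. The only differences are bookkeeping: the paper strengthens the induction statement to $\sum_{j\in A}F_z(j)\le\sum_{j\in T(A,z-A)}H_z(j)$ rather than zeroing out $H$-mass at the payment points, and your degenerate case $p=q$ never actually occurs, since property \ref{enu:Additivity} gives $T(y,z-x)-T(x,z-y)=y-x\neq 0$.
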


\begin{proof}We need to prove that
$$ \sum_{j, z \in G} F_z(j) \leq \sum_{j, z \in G} H_z(j). $$
Fix $z\in G$. It is sufficient to prove that for every finite set $A\sub G$,
\begin{equation} \label{eq_1139}
\sum_{j\in A}F_{z}\left(j\right)\le\sum_{j\in T\left(A,z-A\right)}H_{z}\left(j\right).
\end{equation}

\noindent We proceed to prove so by induction on $n=\left|A\right|$.
\\
\textbf{Induction base}: For $n=0$ the statement is vacuous, as the empty sum equals zero. For $n=1$ the statement holds by \eqref{eq:55}.

\noindent \textbf{Induction step}: Assume $n \geq 2$ and that the statement holds for
all $m\le n-1$. Let $A\sub G$ with $\left|A\right|=n$. By assumption,
there exist distinct $x,y\in A$ such that assertions \ref{enu:T123_a} and \ref{enu:T123_b} of Definition \ref{def:exlucsion}
%in \ref{enu:T123}
are satisfied. By switching $x$ with $y$ if necessary,
we may assume that $F_{z}\left(x\right)\le F_{z}\left(y\right)$.
By (\ref{claim:FiFj}) we have
\begin{equation} \label{eq_1141}
F_{z}\left(x\right)F_{z}\left(y\right)\le H_{z}\left(T\left(x,z-y\right)\right)H_{z}\left(T\left(y,z-x\right)\right).
\end{equation}

\noindent \underline{Case 1}: Assume $F_{z}\left(y\right)\ge\max\left\{ H_{z}\left(T\left(x,z-y\right)\right),H_{z}\left(T\left(y,z-x\right)\right)\right\} $. Then, it follows from (\ref{eq_1141}) that
\begin{equation} F_{z}\left(x\right)\le\min\left\{ H_{z}\left(T\left(x,z-y\right)\right),H_{z}\left(T\left(y,z-x\right)\right)\right\}. \label{eq_1142} \end{equation}
The induction hypothesis for $A_{1}=A\setminus{\{x\}}$ tells us that
\begin{equation} \label{eq_1143}
\sum_{j\in A_{1}}F_{z}\left(j\right)\le\sum_{j\in T\left(A_{1},z-A_{1}\right)}H_{z}\left(j\right).
\end{equation}
By adding inequalities (\ref{eq_1142}) and (\ref{eq_1143}), and using property \ref{enu:T123_a} of Definition \ref{def:exlucsion},
we obtain the desired inequality (\ref{eq_1139}).

\noindent \underline{Case 2}: Assume $F_{z}\left(y\right)\le\max\left\{ H_{z}(T\left(x,z-y\right)),H_z(T\left(y,z-x\right))\right\} $.
Since $F_z(x)\le F_z(y)$, we may apply (\ref{eq_1141}) and Fact \ref{Fact:max_sum} and obtain
\begin{equation}\label{eq:56}
F_{z}(x) + F_z(y) \leq  H_{z}\left(T\left(x,z-y\right)\right)+H_{z}\left(T\left(y,z-x\right)\right).
\end{equation}
Note that  $T(x,z-y)\neq T(y,z-x)$ as $T(y,z-x)-T(x,z-y)=y-x \neq 0$. Therefore, by combining \eqref{eq:56} with the induction hypothesis for $A_{3}=A\setminus{\{x,y\}}$ and property (b),
%in \ref{enu:T123}
we deduce the inequality (\ref{eq_1139}). This completes our proof.
\end{proof}

\begin{proof}[Proof of Theorem \ref{thm:main_gen}]
	
	We proceed by induction on $k$, the number of groups participating in the decomposition of $G$  given in \ref{enu:T123}. For $k=1$, the statement is equivalent to that in Proposition \ref{prop:core}. Assume next that  $k\ge2$ and that the statement holds true for  $k-1$.

\noindent Denote $G'=G_2\times\dots\times G_{k}$. For $a,b\in G_1$ and $x',y'\in G'$  denote
$$f^a(x')=f(a,x'), \quad g^a(x')=g(a,x'), \quad h^{a}(x')=h(a,x'), \quad k^{a}(x')=k(a,x').$$

%Also denote $\overline{T}_k(a,b)=a+b-T_k(a,b)$.

%Suppose $G=G_1\times \dots\times G_k$ and
%$$T(x,y)=T_1(x_1,y_1)\times \dots\times T_k(x_k,y_k)\,\,\forall x,y=(x_1,\dots,x_k),(y_1,\dots,y_k)\in G_1\times\dots\times G_k.$$

Fix $a, b\in G_1$. For  $i\in\{2,\dots,k\}$, define $T'_i:(G_2\times\dots\times G_i)\times (G_2\times\dots\times G_i)\to G_i$ by $T'_i(x,y)=T_i\big((a,x),(b,y)\big)$, and  $T':G'\times G'\to G'$ by $T'=(T'_2, \dots, T'_k)$. Note that $T'$ satisfies \ref{enu:Additivity} and \ref{enu:T123} with respect to this decomposition.
The assumptions of the theorem tell us that
$$
f^a(x')g^b(y')\le h^{T_1(a,b)}\big(T'(x',y')\big)k^{a+b-{T}_1(a,b)}\big(x'+y'-T'(x',y')\big)\quad\quad\forall x',y'\in G'.
$$
By the induction hypothesis, it follows that
$$
\left( \sum_{x'\in G'}f^a(x') \right) \left( \sum_{x'\in G'}g^b(x') \right) \le \left( \sum_{x'\in G'}h^{T_1(a,b)}(x') \right) \left( \sum_{x'\in G'}k^{a+b-{T}_1(a,b)}(x') \right).
$$

\noindent For every $a\in G_1$ set
$$F(a)=\sum_{x'\in G'} f^a(x'), \ \ G(a)=\sum_{x'\in G'} g^a(x'), \ \ H(a)=\sum_{x'\in G'} h^a(x') \ \ \text{ and } \ \ K(a)=\sum_{x'\in G'} k^a(x').$$

\noindent Rewriting the previous inequality, we have  for all $a,b \in G_1$,
$$F(a)G(b)\le H(T_1(a,b))K(a+b-T_1(a,b)). $$
Since $T_1: G_1 \times G_1 \rightarrow G_1$ is an exclusive map satisfying (P1), we may apply Proposition \ref{prop:core}  and conclude  that
$$ \left( \sum_{a\in G_1} F(a) \right) \left( \sum_{a\in G_1} G(a) \right) \le \left( \sum_{a\in G_1} H(a) \right) \left( \sum_ {a\in G_1}K(a) \right).$$
This completes the proof.
\end{proof}

\section{\label{sec:Proof_Main}Proof of Theorem \ref{thm:main}}

Theorem \ref{thm:main} is an  immediate consequence of Theorem \ref{thm:main_gen} due to the following observation:
\begin{lem}
\label{claim:Mon_Implies_12}Suppose $T:\ZZ^{n}\times\ZZ^{n}\to\ZZ^{n}$
satisfies properties \ref{enu:Additivity} and \ref{enu:Monotonicity}.
Then $T$ satisfies property \ref{enu:T123}.
% $T^{a,b}_i$ satisfies property \ref{enu:T123} for all $i\in\{1,\dots,k\}$ and $a,b\in G_1\times\dots\times G_{i-1}$.
\end{lem}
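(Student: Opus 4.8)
The plan is to use in \ref{enu:T123} the \emph{same} decomposition $\ZZ^n = G_1\times\dots\times G_k$ that is furnished by \ref{enu:Monotonicity}. Since requirement (i) of \ref{enu:T123} is literally requirement (i) of \ref{enu:Monotonicity}, everything reduces to checking requirement (ii): for each $i$ and all $a,b\in G_1\times\dots\times G_{i-1}$, the operation $S:=T_i^{(a,b)}:G_i\times G_i\to G_i$ is exclusive. First I would record that $S$ inherits translation equivariance on $G_i$ from \ref{enu:Additivity}: applying $T(u+w,v+w)=T(u,v)+w$ with the translation vector $w$ supported on the $i$-th block (and reading off the $i$-th component, using that $T_i$ sees only the first $i$ coordinates) gives $S(x+t,y+t)=S(x,y)+t$ for all $t\in G_i$. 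Consequently $S(x,y)=\phi(x-y)+y$, where $\phi(s):=S(s,0)$.

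Next I would convert the monotonicity in (ii) of \ref{enu:Monotonicity} into two properties of $\phi$ relative to the total additive order $\ll_i$. Setting both $y$-arguments to $0$ shows $\phi$ is $\ll_i$-non-decreasing; fixing the first argument and varying the second shows $\phi(s')-\phi(s)\ll_i s'-s$ whenever $s\ll_i s'$. Thus $0\ll_i\phi(s')-\phi(s)\ll_i s'-s$ for $s\ll_i s'$, i.e. $\phi$ is monotone and ``$1$-Lipschitz from above''. Now fix $z\in G_i$ and a finite $A\sub G_i$ with $|A|=n\ge2$, list $A=\{a_1\prec_i\dots\prec_i a_n\}$ ($\prec_i$ denoting the strict order), and propose the pair $x=a_1$, $y=a_n$. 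Writing $\Phi_z(u,v):=S(u,z-v)=\phi(u+v-z)+z-v$ and $\Phi_z(A_i\times A_i):=\{\Phi_z(u,v)\,;\,u,v\in A_i\}$ (the set $S(A_i,z-A_i)$ of Definition \ref{def:exlucsion}), the two properties of $\phi$ give that $\Phi_z(\cdot,v)$ is $\ll_i$-non-decreasing and $\Phi_z(u,\cdot)$ is $\ll_i$-non-increasing. Hence over any sub-box $B\times B$ with $B\sub A$ the minimum of $\Phi_z$ is attained at $(\min B,\max B)$ and the maximum at $(\max B,\min B)$; in particular $p:=\Phi_z(x,y)$ and $q:=\Phi_z(y,x)$ are the global minimum and maximum of $\Phi_z$ on $A\times A$.

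For \ref{enu:T123_a} with $A_1=A\setminus\{a_1\}$ I would use the sub-box principle: the minimum of $\Phi_z$ over $A_1\times A_1$ is $\Phi_z(a_2,a_n)$ and the maximum is $\Phi_z(a_n,a_2)$. Writing $c:=\phi(a_1+a_n-z)$, a one-line computation shows that $p\in\Phi_z(A_1\times A_1)$ forces $\phi(a_2+a_n-z)=c$, whereas $q\in\Phi_z(A_1\times A_1)$ forces $\phi(a_2+a_n-z)=c+(a_2-a_1)$. Since $a_2\neq a_1$ these are incompatible, so $\{p,q\}\not\sub\Phi_z(A_1\times A_1)$; the case $A_2=A\setminus\{a_n\}$ is symmetric, using $\Phi_z(a_1,a_{n-1})$ and $\Phi_z(a_{n-1},a_1)$ and the fact $a_n\neq a_{n-1}$.

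Finally, for \ref{enu:T123_b} with $A_3=A\setminus\{a_1,a_n\}$ (vacuous when $n=2$) I would argue directly. If $\Phi_z(u,v)=p$ with $u,v\in A_3$, then $\phi(a_1+a_n-z)-\phi(u+v-z)=a_n-v\succ_i 0$; monotonicity forces $a_1+a_n\succ_i u+v$, and the Lipschitz bound then gives $a_n-v\ll_i a_1+a_n-u-v$, i.e. $u\ll_i a_1$, contradicting $u\succeq_i a_2$. The same computation applied to $q$ forces $u\succeq_i a_n$, again a contradiction, so $p,q\notin\Phi_z(A_3\times A_3)$. I expect the genuine obstacle to be precisely this interplay of \ref{enu:T123_a} and \ref{enu:T123_b} under merely \emph{weak} monotonicity: one cannot claim that the corner values $p,q$ are \emph{strictly} extremal, since $\phi$ may be locally constant (as for $S(x,y)=y$, where $\phi\equiv0$). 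The argument nevertheless goes through because the obstruction to $p,q$ lying in the smaller boxes takes the form of two mutually exclusive equalities for \ref{enu:T123_a}, and of a pinching between the monotonicity and Lipschitz bounds for \ref{enu:T123_b}.
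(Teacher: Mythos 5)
Your proof is correct, and at the structural level it follows the same route as the paper: you keep the decomposition furnished by \ref{enu:Monotonicity}, reduce the claim to showing that each conditional map $T_i^{(a,b)}$ is exclusive, and for a finite set $A$ you propose the same extremal pair, namely the $\ll_i$-minimum and $\ll_i$-maximum of $A$. Two differences are worth recording. First, the paper packages the reduction as an induction on the number of factors $k$ (fixing $a_1,b_1\in G_1$, passing to the map $T'$ on $G_2\times\dots\times G_k$, and invoking the inductive hypothesis), whereas you observe that the reduction is immediate: each $T_i^{(a,b)}$ inherits translation equivariance from \ref{enu:Additivity} (via translations supported on the $i$-th block, which leave the conditioning $(a,b)$ untouched precisely because $T_i$ is adapted) and monotonicity from \ref{enu:Monotonicity}(ii). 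This is the same content without the inductive wrapper, and it is a genuine streamlining. Second, your one-group argument is routed through the normal form $S(x,y)=\phi(x-y)+y$ with $\phi$ non-decreasing and satisfying $\phi(s')-\phi(s)\ll_i s'-s$ for $s\ll_i s'$. For condition \ref{enu:T123_a} this yields two incompatible equalities, which is essentially the paper's observation that the spread of $T_1(A_1,z-A_1)$ equals $x_m-x_2$ and is therefore strictly smaller than the gap $x_m-x_1$ between the two corner values; here the arguments coincide. For condition \ref{enu:T123_b}, however, the paper uses a minimum-gap translation trick: with $v=\min\{x_{i+1}-x_i\}\succ 0$, equivariance and monotonicity give $\max T_1(A_3,z-A_3)\ll T_1(x_m,z-x_1)-v\prec T_1(x_m,z-x_1)$, while you instead pinch between the monotone bound (forcing $u+v\prec_i a_1+a_n$) and the Lipschitz bound (forcing $u\ll_i a_1$), reaching a contradiction. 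Both verifications are equally elementary; yours makes explicit the structural fact that the maps in question are exactly translates by $y$ of monotone, $1$-Lipschitz-from-above functions of $x-y$, whereas the paper's gap argument avoids introducing $\phi$ at all.
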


\begin{proof}
	
We first show that $T_1$ is exclusive. To this end, let $m\ge 2$ and $z\in G_1$. Suppose $A=\left\{ x_{1},\dots,x_{m}\right\}\sub G_1$, where $x_{1}\prec_1\dots\prec_1 x_{m}$. Here $a\prec_1 b$
	means that $a\ll_1 b$ and $a\neq b$. Set $x=x_{1},y=x_{m}$, and recall that $A_1=A\setminus{\{x\}}$, $A_2=A\setminus{\{y\}}$, and $A_3=A\setminus{\{x,y\}}$. For a finite subset $S \subseteq G_1$ we write $\max S$ and $\min S$ for the maximal
and minimal elements with respect to the total order $\preceq_1$. By properties (P1) and (P2) we have
	%\[
	%T\left(x_{k},z-x_{1}\right)-T\left(x_{1},z-x_{k}\right)=x_{k}-x_{1}.
	%\]
	%Recall that $A_1=A\setminus{\{x\}}$. By the monotonicity of $T$, it follows that
\begin{align*}
		\max_{v, w \in A_1} T_1\left(w,z-v\right) & -\min_{v, w \in A_1}  T_1\left(v,z-w\right) = T_1\left(x_{m},z-x_{2}\right)-T_1\left(x_{2},z-x_{m}\right) \\ & =x_{m}-x_{2}\prec_1 x_{m}-x_{1}=T_1\left(x_{m},z-x_{1}\right)-T_1\left(x_{1},z-x_{m}\right).
	\end{align*}
	Therefore,  $\{T_1 \left(x_{1},z-x_{m}\right), T_1 \left(x_{m},z-x_{1}\right)\}\not\sub T_1 \left(A_{1},z-A_{1}\right)$.
	A similar argument shows that the same holds for $A_{2}$. This verifies condition \ref{enu:T123_a} of Definition \ref{def:exlucsion}.
To verify condition \ref{enu:T123_b} of Definition \ref{def:exlucsion}, note that if $m=2$ then $A_3=\emptyset$, and hence condition \ref{enu:T123_b} holds trivially. Otherwise, letting  $v=\min\left\{x_{i+1}-x_{i}\,;\,i\in\left\{ 1,\dots,m-1\right\} \right\} \succ0$, we have
\begin{align*}
		\max_{v,w\in A_{3}}T_1 \left(v,z-w\right) & =T_1 \left(x_{m-1},z-x_{2}\right)\\
		& \ll T_1 \left(x_{m}-v,z-x_{1}-v\right)=T_1 \left(x_{m},z-x_{1}\right)-v\prec T_1 \left(x_{m},z-x_{1}\right).
	\end{align*}
	Therefore,  $T_1 \left(x_{m},z-x_{1}\right)\not\in T_1 \left(A_{3},z-A_{3}\right)$. Similarly, $T_1 \left(x_{1},z-x_{m}\right)\not\in T_1 \left(A_{3},z-A_{3}\right)$, which verifies condition \ref{enu:T123_b} of Definition \ref{def:exlucsion}. Hence $T_1$ is an exclusive map.
We proceed by induction on the number of groups $k$ in the decomposition of $G$  given in property \ref{enu:T123}. For $k=1$, we verified above that the statement holds for  $T=T_1$.

Let $k \geq 2$ and assume that the statement holds for a decomposition into $k-1$ groups. Fix $a_1,b_1\in G_1$ and let $G'=G_2\times\dots\times G_k$.
For $i\in\{2,\dots,k\}$, define $T'_i:(G_2\times\dots\times G_i)\times (G_2\times\dots\times G_i)\to G_i$ by $T'_i(x,y)=T_i\big((a_1,x),(b_1,y)\big)$ and  $T':G'\times G'\to G'$ by $T'=(T'_2,\dots,T'_k)$. Note that $T'$ satisfies \ref{enu:Additivity} and \ref{enu:Monotonicity} with respect to this decomposition. By the induction hypothesis, $T'$ satisfies \ref{enu:T123}.

This means that for all $i\in\{2,\dots,k\}$ and $a=(a_1,a'),b=(b_1,b')\in G_1\times\dots\times G_{i-1}$, the map $T^{(a,b)}_i=T'^{\,(a',b')}_i$ is exclusive.  Since $a_1$ and $b_1$ are arbitrary, it follows that  $T^{(a,b)}_i$ is exclusive for all $i\in\{1,\dots,k\}$ and $a,b\in G_1\times\dots\times G_{i-1}$, and thus $T$ satisfies \ref{enu:T123}.
\end{proof}

\begin{rem}\label{rem:1}
	
	Using Lemma \ref{claim:Mon_Implies_12}, one can show that the operations in \eqref{eq_1456} do not satisfy \ref{enu:Monotonicity} without decomposing $\ZZ^n$ into a direct sum of more than one group.
	%In other words, given any total additive ordering on $\ZZ^n$, none of these operations is increasing in each of its coordinates.	
	To see this, consider e.g., $T(x,y)=x\vee y$, defined for $x,y\in\ZZ^2$. By Lemma \ref{claim:Mon_Implies_12}, it is sufficient to show that $T$ is not exclusive. A direct inspection of the set $A=\{(-1,-1),(-1,1),(1,-1),(1,1)\} \subset \ZZ^2$ and the point $z=(0,0) \in \ZZ^2$ shows that $T$ indeed violates the conditions of Definition \ref{def:exlucsion}.
\end{rem}

\section{Related inequalities}
\subsection{\label{sec:BM} Continuous Brunn-Minkowski type inequalities}

The classical Brunn-Minkowski inequality states that for any two non-empty Borel-measurable subsets of $\RR^n$, one has
\[
\vol{A+B}^{1/n}\ge\vol A^{1/n}+\vol B^{1/n}.
\]
In its equivalent dimension-free form, it states that for any $\lambda\in\left[0,1\right]$,
\[
\vol{\lambda A+\left(1-\lambda\right)B}\ge\vol A^{\lambda}\vol B^{1-\lambda}.
\]

A functional form of the Brunn-Minkowski inequality, known as the
Prékopa-Leindler inequality, states that for any Borel functions $f,g,h:\RR^{n}\to[0,\infty)$
and any $\lambda\in\left[0,1\right]$ such that $f\left(x\right)^{\lambda}g\left(y\right)^{1-\lambda}\le h\left(\lambda x+\left(1-\lambda\right)y\right)$
for all $x,y\in\RR^{n}$, we have
\begin{equation}
\left(\int_{\RR^n} f\left(x\right)\,dx\right)^{\lambda}\left(\int_{\RR^n} g\left(x\right)\,dx\right)^{1-\lambda}\le\int_{\RR^n} h\left(x\right)\,dx.\label{eq:Classical_PL}
\end{equation}
See, e.g., the first pages in Pisier \cite{pis} for proofs of these inequalities.
When $\lambda=1/2$ and $h =k$, the analogy between Theorem \ref{thm:lambda_BM}
and the Pr\'ekopa--Leindler inequality is evident, see \cite[Section 2.3]{Gozlan2019} for a standard limiting argument
that leads from Theorem \ref{thm:lambda_BM} to \eqref{eq:Classical_PL}. For $\lambda\neq1/2$, a similar limiting
argument leads to a weighted variant of the Pr\'ekopa--Leindler inequality due to Cordero-Erausquin and Maurey
\cite{Cordero-Erausquin2017}:
\begin{thm}
\label{thm:Cordero_Maurey}Let $\lambda\in\left[0,1\right]$. Suppose
$f,g,h,k:\RR^{n}\to[0,\infty)$ are measurable functions satisfying
\[
f\left(x\right)g\left(y\right)\le h\left(\lambda x+\left(1-\lambda\right)y\right)k\left(\left(1-\lambda\right)x+\lambda y\right)\,\,\forall x,y\in\RR^{n}.
\]
Then
\[
\left(\int_{\RR^n} f\left(x\right)\,dx\right)\left(\int_{\RR^n} g\left(x\right)\,dx\right)\le\left(\int_{\RR^n} h\left(x\right)\,dx\right)\left(\int_{\RR^n} k\left(x\right)\,dx\right).
\]
\end{thm}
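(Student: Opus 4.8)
The plan is to derive Theorem~\ref{thm:Cordero_Maurey} from the discrete inequality of Theorem~\ref{thm:lambda_BM} by a discretization-and-limiting procedure, in the spirit of \cite[Section 2.3]{Gozlan2019}. I would first dispose of the degenerate cases. For $\lambda\in\{0,1\}$ the hypothesis reduces to $f(x)g(y)\le h(y)k(x)$ (or its mirror image), and the conclusion is immediate by Fubini. If $\int_{\RR^n}h$ or $\int_{\RR^n}k$ is infinite while both are positive, the right-hand side is infinite and there is nothing to prove; and if, say, $\int_{\RR^n}h=0$, then $h=0$ almost everywhere, so for $\lambda\in(0,1)$ the set $\{(x,y):\lambda x+(1-\lambda)y\in\{h\neq0\}\}$ is Lebesgue-null in $\RR^{2n}$ (the map $(x,y)\mapsto(\lambda x+(1-\lambda)y,\,y)$ being a linear isomorphism), whence $f(x)g(y)=0$ for almost every $(x,y)$ and the left-hand side vanishes by Fubini. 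I may therefore assume $\lambda\in(0,1)$ and $0<\int_{\RR^n}h,\int_{\RR^n}k<\infty$, and—after the reduction discussed at the end—that $f,g,h,k$ are Riemann integrable, i.e.\ bounded, compactly supported and almost everywhere continuous.

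For the discretization, fix a large $N\in\NN$ and pass to the lattice $\tfrac1N\ZZ^n$. For $u\in\ZZ^n$ write $C_u=\tfrac1N(u+[0,1)^n)$ for the half-open cube of side $1/N$ with lower corner $u/N$, and define $\tilde f,\tilde g,\tilde h,\tilde k:\ZZ^n\to[0,\infty)$ by
\[
\tilde f(u)=\inf_{C_u}f,\quad \tilde g(u)=\inf_{C_u}g,\quad \tilde h(u)=\sup_{C_u}h,\quad \tilde k(u)=\sup_{\frac1N(u+(-1,0]^n)}k.
\]
The crucial step is to check the hypothesis of Theorem~\ref{thm:lambda_BM} for this quadruple, and this is exactly where the floor and ceiling are used. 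Given $u,w\in\ZZ^n$, put $r=\lambda u+(1-\lambda)w$ and $s=(1-\lambda)u+\lambda w$. Evaluating the continuous hypothesis at the corner points $x_0=u/N\in C_u$ and $y_0=w/N\in C_w$ gives
\[
\tilde f(u)\tilde g(w)\le f(x_0)g(y_0)\le h(r/N)\,k(s/N).
\]
Now $\flr r=p$ is equivalent to $r/N\in C_p$, and $\ceil s=q$ is equivalent to $s/N\in\tfrac1N(q+(-1,0]^n)$; hence $h(r/N)\le\tilde h(\flr r)$ and $k(s/N)\le\tilde k(\ceil s)$, and we obtain
\[
\tilde f(u)\tilde g(w)\le\tilde h(\flr{\lambda u+(1-\lambda)w})\,\tilde k(\ceil{(1-\lambda)u+\lambda w})\qquad\forall u,w\in\ZZ^n,
\]
which is precisely the assumption of Theorem~\ref{thm:lambda_BM}.

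Applying Theorem~\ref{thm:lambda_BM} to $\tilde f,\tilde g,\tilde h,\tilde k$ and dividing by $N^{2n}$ yields
\[
\Bp{\frac1{N^n}\sum_{u\in\ZZ^n}\tilde f(u)}\Bp{\frac1{N^n}\sum_{u\in\ZZ^n}\tilde g(u)}\le\Bp{\frac1{N^n}\sum_{u\in\ZZ^n}\tilde h(u)}\Bp{\frac1{N^n}\sum_{u\in\ZZ^n}\tilde k(u)}.
\]
For Riemann integrable functions the four normalized sums are lower and upper Darboux sums—the grid underlying $\tilde k$ being merely a fixed translate, which does not affect the limit—so letting $N\to\infty$ the left-hand side converges to $\left(\int_{\RR^n}f\right)\left(\int_{\RR^n}g\right)$ and the right-hand side to $\left(\int_{\RR^n}h\right)\left(\int_{\RR^n}k\right)$. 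This proves the theorem for Riemann integrable data.

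The main obstacle is the passage from Riemann integrable data to arbitrary measurable functions. The asymmetric choice of infima on the left and suprema on the right makes the discrete hypothesis automatic, but it forces us to control upper Darboux sums, which converge to the Lebesgue integral only when the integrand is almost everywhere continuous. The naive remedies all fail to respect the coupling: mollifying all four functions is ruled out because the substitution $(a,b)\mapsto(\lambda a+(1-\lambda)b,\,(1-\lambda)a+\lambda b)$, invertible for $\lambda\neq\tfrac12$, turns a product mollifier into a correlated kernel acting on the pair $(h,k)$; truncating $h,k$ from above destroys the hypothesis; and one cannot alter $h,k$ on a null set, since the hypothesis is pointwise and the arguments $\lambda x+(1-\lambda)y$ may land anywhere. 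A correct reduction therefore has to approximate the whole quadruple simultaneously in a hypothesis-preserving way, replacing $f,g$ by continuous minorants and $h,k$ by carefully chosen continuous majorants whose integrals converge to the originals. Carrying this out—which is exactly the content of the limiting argument of \cite[Section 2.3]{Gozlan2019}—is the one genuinely delicate ingredient, whereas the discretization itself is routine once the floor and ceiling are matched to the cubes as above.
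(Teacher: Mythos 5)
Your discretization is correct, and it is essentially the limiting argument the paper has in mind (the paper itself omits the proof entirely, deferring to \cite[Section 2.3]{Gozlan2019}): taking $\inf$ over the half-open cubes $\tfrac1N(u+[0,1)^n)$ for $f,g$ and $\sup$ over the matched cubes $\tfrac1N(p+[0,1)^n)$ and $\tfrac1N(q+(-1,0]^n)$ for $h$ and $k$ makes the hypothesis of Theorem \ref{thm:lambda_BM} hold exactly, with no error term and no continuity needed, and the Darboux-sum limit then settles the case of bounded, compactly supported, almost everywhere continuous data. Your handling of the degenerate cases ($\lambda\in\{0,1\}$, vanishing or infinite right-hand side) is also correct.

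The gap is in your final paragraph, and it is not a deferred technicality: the reduction you describe --- replacing $f,g$ by continuous (or a.e.\ continuous) minorants and $h,k$ by continuous majorants whose integrals converge to the originals --- is not merely delicate but impossible. Let $K\sub[0,1]$ be a compact set of positive measure with empty interior (a fat Cantor set). If $\vphi\le\one_{K}$ is a.e.\ continuous, then at each continuity point $x$ of $\vphi$ every neighbourhood of $x$ meets the dense open set $\RR\setminus K$, where $\vphi\le0$; hence $\vphi\le0$ a.e.\ and $\int\vphi\le0<|K|$. Dually, if $\psi\ge\one_{[0,1]\setminus K}$ is a.e.\ continuous, then $\psi\ge1$ at a.e.\ point of $(0,1)$, so $\int\psi\ge1>1-|K|$. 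Thus Riemann-integrable minorants of $f,g$ and majorants of $h,k$ cannot capture the integrals, and no grid-based Darboux argument combined with such an approximation can reach general measurable data (indeed, for $f=g=\one_K$ your lower sums vanish identically for every $N$). This is also not what \cite[Section 2.3]{Gozlan2019} does: as cited in this paper, that argument derives the \emph{set-theoretic} multiplicative Brunn--Minkowski inequality, where measurability is handled by inner regularity --- indicators of compact sets do have convergent upper grid sums --- and the functional statement is then recovered by level-set arguments; for the present four-function hypothesis with $h\neq k$ and $\lambda\neq1/2$, the hypothesis does not decompose into level sets, so that route is unavailable as well. Consequently your argument, as written, proves Theorem \ref{thm:Cordero_Maurey} for Riemann-integrable data only; extending it to arbitrary measurable $f,g,h,k$ requires a genuinely different idea (a hypothesis-preserving regularization adapted to the coupled right-hand side), and you should either supply one or state the result you actually prove.
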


\noindent Note that for $\lambda=1/2$ and $h=k$, Theorem \ref{thm:Cordero_Maurey}
coincides with (\ref{eq:Classical_PL}). We omit the details of the standard limiting argument leading from Theorem \ref{thm:lambda_BM} to
Theorem \ref{thm:Cordero_Maurey}, as they are almost identical to the argument in \cite[Section 2.3]{Gozlan2019}.
Another inequality in the spirit of Theorem \ref{thm:Cordero_Maurey}
is the following limit case of Theorem \ref{thm:4fcns}. Again, the limiting argument is standard
and it is omitted.
\begin{thm}
Suppose $f,g,h,k:\RR^{n}\to[0,\infty)$ are Borel functions satisfying
\begin{equation}
f\left(x\right)g\left(y\right)\le h\left(x\wedge y\right)k\left(x\vee y\right)\,\,\forall x=\left(x_{1},\dots,x_{n}\right),y=\left(y_{1},\dots,y_{n}\right)\in\RR^{n}
\label{eq_412} \end{equation}
where $x\wedge y=\left(\min\left(x_{1},y_{1}\right),\dots,\min\left(x_{n},y_{n}\right)\right)$,
and $x\vee y=\left(\max\left(x_{1},y_{1}\right),\dots,\max\left(x_{n},y_{n}\right)\right)$.
Then
\[
\left(\int_{\RR^n} f\left(x\right)\,dx\right)\left(\int_{\RR^n} g\left(x\right)\,dx\right)\le\left(\int_{\RR^n} h\left(x\right)\,dx\right)\left(\int_{\RR^n} k\left(x\right)\,dx\right).
\] \label{thm_413}
\end{thm}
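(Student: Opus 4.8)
The plan is to deduce Theorem~\ref{thm_413} from the discrete four functions theorem (Theorem~\ref{thm:4fcns}) by a discretisation argument, after first stripping the result down to one dimension. As a preliminary normalisation I would reduce to the case where all four functions are compactly supported and integrable, with $f,g$ bounded: replacing $f,g$ by $\min(f,N)\one_{[-R,R]^n}$ and $\min(g,N)\one_{[-R,R]^n}$ shrinks the left-hand side, while replacing $h,k$ by $h\one_{[-R,R]^n}$ and $k\one_{[-R,R]^n}$ only decreases the right-hand side and preserves \eqref{eq_412} (for $x,y\in[-R,R]^n$ one has $x\wedge y,\,x\vee y\in[-R,R]^n$, and otherwise the left-hand side vanishes). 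Monotone convergence as $N,R\to\infty$ then recovers the general case.

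Next I would reduce to $n=1$ by induction on $n$, exploiting that $\wedge,\vee$ act coordinatewise. Writing $\RR^n=\RR^{n-1}\times\RR$ and $x=(x',s),\,y=(y',t)$, one has $x\wedge y=(x'\wedge y',\min(s,t))$ and $x\vee y=(x'\vee y',\max(s,t))$. Thus for each fixed $s,t\in\RR$ the slices $f(\cdot,s),g(\cdot,t),h(\cdot,\min(s,t)),k(\cdot,\max(s,t)):\RR^{n-1}\to[0,\infty)$ satisfy the hypothesis \eqref{eq_412} in dimension $n-1$. Applying the theorem in dimension $n-1$ and setting $F(s)=\int_{\RR^{n-1}}f(\cdot,s)$ and likewise $G,H,K$ (measurable by Tonelli), I obtain $F(s)G(t)\le H(\min(s,t))K(\max(s,t))$ for all $s,t$, which is exactly the one-dimensional hypothesis for $(F,G,H,K)$. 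The case $n=1$ then gives $(\int F)(\int G)\le(\int H)(\int K)$, i.e.\ the claim in dimension $n$.

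It remains to treat $n=1$. Here I would tile $\RR$ by the intervals $Q_m=\eps m+[0,\eps)$, $m\in\ZZ$, noting the key structural fact that $x\in Q_m,\,y\in Q_{m'}$ force $\min(x,y)\in Q_{\min(m,m')}$ and $\max(x,y)\in Q_{\max(m,m')}$, so the tiling is compatible with $\min$ and $\max$. Taking cell averages $f_\eps(m)=\eps^{-1}\int_{Q_m}f$, and likewise $g_\eps,h_\eps,k_\eps$, is attractive because $\eps\sum_m f_\eps(m)=\int f$ \emph{exactly}, so no Riemann-sum convergence is lost. For $m\ne m'$ the two cells are strictly ordered, and the map $(x,y)\mapsto(\min(x,y),\max(x,y))$ carries $Q_m\times Q_{m'}$ onto $Q_{\min(m,m')}\times Q_{\max(m,m')}$ measure-preservingly; averaging \eqref{eq_412} over this product then yields the discrete hypothesis
\[
f_\eps(m)g_\eps(m')\le h_\eps(\min(m,m'))\,k_\eps(\max(m,m'))\qquad(m\ne m').
\]
Granting this for \emph{all} pairs, Theorem~\ref{thm:4fcns} would give $(\eps\sum f_\eps)(\eps\sum g_\eps)\le(\eps\sum h_\eps)(\eps\sum k_\eps)$, which is precisely $(\int f)(\int g)\le(\int h)(\int k)$.

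I expect the main obstacle to be the diagonal $m=m'$. Within a single cell the map $(x,y)\mapsto(\min,\max)$ folds $Q_m\times Q_m$ two-to-one, so averaging \eqref{eq_412} only gives $f_\eps(m)g_\eps(m)\le\eps^{-2}\int_{Q_m^2}h(\min(x,y))k(\max(x,y))\,dx\,dy$, which may strictly exceed $h_\eps(m)k_\eps(m)$; hence the discrete hypothesis can fail on the diagonal. The saving feature is that the \emph{total} diagonal defect is negligible: a short computation shows the per-cell excess equals $\eps^{-2}\int_{Q_m^2,\,x<y}\bigl(h(x)k(y)-h(y)k(x)\bigr)$, whose absolute value sums to at most $2\bigl(\sup_m\int_{Q_m}h\bigr)\int k$, which tends to $0$ as $\eps\to0$ by absolute continuity of the integral. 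The delicate point is that, although this defect is negligible \emph{in the product} $(\eps\sum h_\eps)(\eps\sum k_\eps)$, repairing the per-pair hypothesis by enlarging $h_\eps,k_\eps$ on diagonal cells inflates the individual marginals $\eps\sum h_\eps$ by an $O(1)$ amount, since within a diagonal cell the defect is comparable to $h_\eps(m)k_\eps(m)$ itself. I therefore expect the real work to lie in finessing the diagonal at genuinely vanishing cost --- for instance by a multiscale argument that resolves each diagonal cell at a finer scale (the off-diagonal part being handled exactly at every level, with the residual diagonal mass telescoping to zero), mirroring the stochastic argument of \cite{Klartag2019}, or alternatively by invoking the optimal-transport approach of \cite{Gozlan2019}. (An alternative discretisation with $f_\eps,g_\eps$ infima and $h_\eps,k_\eps$ suprema over cells preserves \eqref{eq_412} exactly, but then moves the difficulty to showing that the resulting one-sided Riemann sums converge to the Lebesgue integrals, which can fail for general Borel functions and would demand its own regularisation respecting the coupling of the four functions.)
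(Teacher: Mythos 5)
Your preliminary truncation and your tensorization to $n=1$ are both correct, and note that the paper itself gives no details here: it simply declares Theorem \ref{thm_413} a ``standard limiting argument'' from Theorem \ref{thm:4fcns} and omits it. But your one-dimensional step has a genuine gap, which you yourself flag: the cell-average discretisation satisfies the hypothesis of Theorem \ref{thm:4fcns} only off the diagonal, and none of your proposed repairs closes the hole. The problem is worse than a ``negligible defect'': the discrete theorem cannot be invoked at all without the diagonal hypothesis, and the diagonal cannot be bypassed by recursion, because even the off-diagonal pairs need it. Concretely, the Fact \ref{Fact:max_sum}-type balancing that drives the discrete proof, applied to a pair of cells $m<m'$ with $a=f_\eps(m)g_\eps(m')$, $b=f_\eps(m')g_\eps(m)$, $c=h_\eps(m)k_\eps(m')$, $d=h_\eps(m')k_\eps(m)$, requires the product condition $ab\le cd$, and this factors as $\bigl(f_\eps(m)g_\eps(m)\bigr)\bigl(f_\eps(m')g_\eps(m')\bigr)\le\bigl(h_\eps(m)k_\eps(m)\bigr)\bigl(h_\eps(m')k_\eps(m')\bigr)$ --- precisely the product of the two diagonal-cell inequalities you do not have. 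Without the diagonal, the hypothesis only yields the ``folded'' bound $\int f\int g\le 2\iint_{x<y}h(x)k(y)\,dx\,dy$, which genuinely exceeds $\int h\int k$ in general (take $h=\one_{[0,1/2)}$, $k=\one_{[1/2,1)}$). Your multiscale scheme therefore faces the same circularity at every scale --- each diagonal cell is a rescaled copy of the full problem --- and does not telescope as you hope.

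The fix is much simpler than what you attempt, reuses your slicing induction verbatim, and explains why the continuous statement is easier, not harder, than the discrete one: in the continuum the diagonal is Lebesgue-null, so the paper's two-point argument can be run pointwise instead of through a discretisation. For $n=1$, Tonelli gives $\int f\int g=\iint_{x<y}\bigl(f(x)g(y)+f(y)g(x)\bigr)\,dx\,dy$ and $\int h\int k=\iint_{x<y}\bigl(h(x)k(y)+h(y)k(x)\bigr)\,dx\,dy$. Fix $x<y$ and set $a=f(x)g(y)$, $b=f(y)g(x)$, $c=h(x)k(y)$, $d=h(y)k(x)$. Applying \eqref{eq_412} to the pairs $(x,y)$ and $(y,x)$ gives $a\le c$ and $b\le c$; applying it to $(x,x)$ and $(y,y)$ and multiplying gives $ab=\bigl(f(x)g(x)\bigr)\bigl(f(y)g(y)\bigr)\le\bigl(h(x)k(x)\bigr)\bigl(h(y)k(y)\bigr)=cd$. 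By Fact \ref{Fact:max_sum}, $a+b\le c+d$, and integrating over $\{x<y\}$ yields the one-dimensional case for arbitrary Borel functions, with no regularisation, no truncation, and no appeal to Theorem \ref{thm:4fcns}. Combined with your dimension-reduction step this proves Theorem \ref{thm_413} in full; it is the exact continuous analogue of the paper's proof of its core proposition, with the measure-zero diagonal playing the role that Cases 1 and 2 of the induction handle discretely.
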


Another possibility, is to replace the operations $x \wedge y$ and $x \vee y$ in (\ref{eq_412}) by the operations
$\min(x,y)$ and $\max(x,y)$ with respect to the standard lexicographic order on $\RR^n$. The conclusion
of Theorem \ref{thm_413} holds true in this case as well, being a limiting case of Theorem 1.3, as the reader may verify.

\subsection{\label{sec:Ollivier-Villani} A discrete Brunn-Minkowski inequality I}
In Ollivier and Villani \cite{Ollivier-Villani}, a Brunn-Minkowski type inequality with curvature terms is proved on the discrete hypercube. A simplified version of their inequality, without curvature, states that for any sets $A,B \subseteq \{0,1\}^n$ one has
\begin{equation}  \# M\ge \sqrt{\# A \, \cdot \, \#B}, \label{eq_257} \end{equation}
 where $M$ is the set of all midpoints of pairs $(a,b)$ with $a\in A$ and $b \in B$.
In the terminology of \cite{Ollivier-Villani}, a point  $m=(m_1,\ldots,m_n) \in \{0, 1\}^n$ is a midpoint of two points $a, b \in \{0,1\}^n$ if $m_i = a_i$ whenever $a_i = b_i$, and
$$  \# \{ 1 \leq j  \leq n \, ; m_j = a_j \} = \# \{ 1 \leq j \leq n \, ; m_j = b_j \} + \eps $$
with $\eps \in \{-1,0,1 \}$, i.e., essentially half of the remaining bits of $m$ coincide with those of $a$ and the other half with those of $b$.

 We show that inequality (\ref{eq_257}) holds for a much smaller subset of midpoints. For example, let us use the  operation $T$ given in \eqref{eq:Ollivier-Villani}. Recall that $T(a,b)=(T_1(a,b),\ldots,T_n(a,b))$ is defined by:
$$T_i(a,b)=\begin{cases}
	a_{i}, & \#\left\{ j\le i\,\,:\,\,a_{j}\neq b_{j}\right\} \text{ is odd}\\
	b_{i}, & \text{otherwise}
\end{cases}$$
It is clear that $T(a,b)$ is one of the midpoints of $a$ and $b$ in the sense of \cite{Ollivier-Villani}, as well as the point $a+b - T(a,b)$.
Denote $$ M_1^- = \bigcup_{a \in A, b \in B} T(a,b), \qquad
M_1^+= \bigcup_{a \in A, b \in B} (a + b - T(a,b) ), \qquad M_1 = M_1^- \cup M_1^+, $$
and let $f=\one_A, g=\one_B, h= k = \one_{M_1}$. Applying Theorem \ref{thm:main} with the above operation $T$ we  obtain
$$ \sqrt{\# A \, \cdot \#B} \leq \sqrt{\# M_1^- \cdot \# M_1^+} \leq \# M_1. $$ This inequality implies (\ref{eq_257}) since $M_1 \subset M$. Our inequality is quite flexible, as there is nothing canonical about the specific definition \eqref{eq:Ollivier-Villani}
of the map $T$, and moreover the analysis applies for subsets of $\ZZ^n$ and not only for subsets of $\{ 0, 1 \}^n$.

\subsection{\label{sec:Yepes}A discrete Brunn-Minkowski inequality II}
Recently, the following inequality was proven by Iglesias, Yepes Nicolás
and Zvavitch \cite{INZ}:
\begin{thm}\label{thm:Yepes}
	\label{thm:Yepes_BM}Let $\lambda\in\left[0,1\right]$. For any two
	bounded non-empty sets $K,L\sub\RR^{n}$, we have
	\[
	G_{n}\left(\lambda K+\left(1-\lambda\right)L+\left(-1,1\right)^{n}\right)^{1/n}\ge\lambda G_{n}\left(K\right)^{1/n}+\left(1-\lambda\right)G_{n}\left(L\right)^{1/n},
	\]
	where $G_{n}\left(M\right)$ denotes the number of lattice points
	in $M\sub\RR^{n}$.
\end{thm}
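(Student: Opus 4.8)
The plan is to deduce this lattice-point inequality from the classical additive Brunn--Minkowski inequality recalled in Section~\ref{sec:BM}, with the translate by $(-1,1)^n$ serving to absorb the discrepancy between lattice-point counts and Lebesgue volumes. The bridge between the two worlds will be to thicken each relevant lattice point into a half-open unit cube.

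Concretely, I would set $A = K \cap \ZZ^n$ and $B = L \cap \ZZ^n$, so that $G_n(K) = |A|$ and $G_n(L) = |B|$, and assume for now that both counts are positive (the degenerate case is handled by the same argument, with an empty cube-union replaced by a single unit cube based at a point of $K$ or $L$). Put $\tilde A = A + [0,1)^n$ and $\tilde B = B + [0,1)^n$. Distinct lattice points give disjoint half-open unit cubes, so $\vol{\tilde A} = G_n(K)$ and $\vol{\tilde B} = G_n(L)$.

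Next I would apply the classical Brunn--Minkowski inequality to $\lambda \tilde A$ and $(1-\lambda)\tilde B$; using the homogeneity $\vol{c X}^{1/n} = c\,\vol{X}^{1/n}$ this gives
\[
\vol{\lambda \tilde A + (1-\lambda)\tilde B}^{1/n} \ge \lambda\, G_n(K)^{1/n} + (1-\lambda)\, G_n(L)^{1/n}.
\]
The point of the construction is that $\lambda \tilde A + (1-\lambda)\tilde B = \lambda A + (1-\lambda)B + [0,1)^n$, since $\lambda[0,1)^n + (1-\lambda)[0,1)^n = [0,1)^n$. Writing $S = \lambda A + (1-\lambda)B$ and $M = \lambda K + (1-\lambda)L + (-1,1)^n$, it then remains only to show that $\vol{S + [0,1)^n} \le G_n(M)$.

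To see this, I would partition $\RR^n$ into the lattice cubes $Q_p = p + [0,1)^n$, $p \in \ZZ^n$. Each $Q_p$ contributes volume at most $1$ to $S + [0,1)^n$, so $\vol{S + [0,1)^n}$ is at most the number of $p \in \ZZ^n$ with $Q_p \cap (S + [0,1)^n) \neq \emptyset$. Any such $p$ satisfies $p \in S + [0,1)^n - [0,1)^n = S + (-1,1)^n$, which is exactly where the box $(-1,1)^n$ enters the picture. Hence $\vol{S + [0,1)^n} \le |(S + (-1,1)^n) \cap \ZZ^n|$, and since $S + (-1,1)^n \subseteq M$ this count is at most $G_n(M)$. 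Chaining the three displays and taking $n$-th roots yields the theorem. I expect the essential idea --- trading counts for volumes, with the box absorbing the rounding --- to be routine; the only points requiring genuine care are the half-open cube bookkeeping and the degenerate case $G_n(K)\,G_n(L) = 0$.
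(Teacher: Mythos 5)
Your proof is correct, but there is nothing in the paper to compare it against: the paper does not prove Theorem \ref{thm:Yepes_BM} at all. The result is quoted from Iglesias, Yepes Nicol\'as and Zvavitch \cite{INZ}, and Section \ref{sec:Yepes} instead records the complementary observation that the paper's own discrete inequality, Theorem \ref{thm:lambda_BM}, recovers the multiplicative $\lambda=1/2$ case $\sqrt{G_{n}(K)G_{n}(L)}\le G_{n}\left(\frac{K+L}{2}+(-1,1)^{n}\right)$ --- which, as the paper notes, also follows from Theorem \ref{thm:Yepes_BM} via the arithmetic/geometric mean inequality. Your argument goes in the opposite direction, deducing the full theorem from the classical continuous Brunn--Minkowski inequality of Section \ref{sec:BM}, and it checks out: the identities $\lambda[0,1)^{n}+(1-\lambda)[0,1)^{n}=[0,1)^{n}$ and $[0,1)^{n}-[0,1)^{n}=(-1,1)^{n}$ hold; the counting step is sound, since the cube $p+[0,1)^{n}$ meets $S+[0,1)^{n}$ only if $p\in S+(-1,1)^{n}\sub\lambda K+(1-\lambda)L+(-1,1)^{n}$, so disjointness of the lattice cubes gives $\vol{S+[0,1)^{n}}\le G_{n}\left(\lambda K+(1-\lambda)L+(-1,1)^{n}\right)$; and your fix for the degenerate case $G_{n}(K)G_{n}(L)=0$ (replacing an empty cube-union by a unit cube based at an arbitrary point of $K$ or $L$, whose volume $1$ still dominates the vanishing count) is genuinely needed, since otherwise the thickened set is empty and Brunn--Minkowski is inapplicable. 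As for what each route buys: yours is a short, self-contained proof of the full additive ($1/n$-power, all $\lambda$) inequality, independent of the paper's machinery; the paper's route through its discrete four-functions theorem yields only the weaker multiplicative $\lambda=1/2$ form, but it demonstrates that Theorem \ref{thm:lambda_BM} already captures inequalities of this type.
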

We recover a multiplicative version of Theorem \ref{thm:Yepes_BM}
for $\lambda=1/2$: Let $f\left(x\right)=\one_{K}\left(x\right)$,
$g\left(x\right)=\one_{L}\left(x\right)$ be the indicator functions
of $K$ and $L$, and $h=\one_{\frac{1}{2}(K+L)+(-1,0]^{n}}$,  $k=\one_{\frac{1}{2}(K+L)+[0,1)^{n}}$.
Note that for every $x\in K$ and $y\in L$, we have
\begin{align*}
	\flr{\frac{x+y}{2}} \in\frac{K+L}{2}-[0,1)^{n}\textit{ and }\ceil{\frac{x+y}{2}}\in\frac{K+L}{2}+[0,1)^{n},
\end{align*}
which implies that $f\left(x\right)g\left(y\right)\le h\left(\flr{\frac{x+y}{2}}\right)k\left(\ceil{\frac{x+y}{2}}\right)$
for all $x,y\in\ZZ^{n}$. By Theorem \ref{thm:lambda_BM}, we have
\[
\sqrt{G_{n}\left(K\right)G_{n}\left(L\right)}\le
\sqrt{ G_{n}\left(\frac{K+L}{2}+[0,1)^{n}\right)
G_{n}\left(\frac{K+L}{2}+(-1,0]^{n}\right)} \le G_{n}\left(\frac{K+L}{2}+(-1,1)^{n}\right),
\]
as follows also from Theorem \ref{thm:Yepes} via the arithmetic/geometric means inequality.

\begin{rem}
We conclude this paper with a little remark on the case of a finite abelian group, where $G = (\ZZ/ p_1 \ZZ) \times \dots \times (\ZZ/p_n \ZZ)$.
There are no additive, complete orderings on such  groups. Therefore, in order to apply Theorem \ref{thm:main}
for four functions $f,g,h,k: G \rightarrow [0, \infty)$, one option  is to define
$$ \tilde{f}(x) = \left \{ \begin{array}{cl} f(\pi(x)) & 0 \leq x_1 < p_1,\ldots, 0 \leq x_n < p_n \\ 0 & \text{otherwise} \end{array} \right. $$
where $\pi: \ZZ^n \rightarrow (\ZZ/ p_1 \ZZ) \times \dots \times (\ZZ/p_n \ZZ) = G$ is the projection map, and similarly to define $\tilde{g}, \tilde{h}, \tilde{k}$.
In the case where the four functions $\tilde{f},\tilde{g}, \tilde{h}, \tilde{k}: \ZZ^n \rightarrow [0, \infty)$ satisfy the assumptions of Theorem  \ref{thm:main},
we obtain the inequality
\[
\Bp{\sum_{x\in G}f\left(x\right)}\Bp{\sum_{x\in G}g\left(x\right)}\le\Bp{\sum_{x\in G}h\left(x\right)}\Bp{\sum_{x\in G}k\left(x\right)}.
\]
\end{rem}

\bibliographystyle{amsplain_Abr_NoDash}
\bibliography{DBM}

\end{document}